\newtheorem{theorem}{Theorem}[section]
\newtheorem{remark}{Remark}[section]
\newtheorem{corollary}{Corollary}[section]
\newtheorem{lemma}{Lemma}[section]
\newtheorem{example}{Example}[section]
\newtheorem{assumption}{Assumption}
\journal{}
\begin{document}
	
	\begin{frontmatter}
		
		
		
		\title{Pointwise-in-time convergence analysis of an Alikhanov scheme for a 2D nonlinear subdiffusion equation}
		
		
		
		\author[OUC]{Chang Hou  }
		\ead{1546758808@qq.com}

		\author[OUC,LOUC]{Hu Chen }
		\ead{chenhu@ouc.edu.cn} 
		
		\author[OUC]{Jian Wang\corref{wang}}
		\ead{pdejwang@ouc.edu.cn} \cortext[wang]{Corresponding author.}
		
		%
		

		\address[OUC]{School of Mathematical Sciences, Ocean University of China, Qingdao 266100, China}
         \address[LOUC]{Laboratory of Marine Mathematics, Ocean University of China, Qingdao 266100, China}
		\begin{abstract}
			In this paper, we discretize the Caputo time derivative of order $\alpha\in(0,1)$ using the Alikhanov scheme on a quasi-graded temporal mesh, and employ the Newton linearization method to approximate the nonlinear term. This yields a linearized fully discrete scheme for the two-dimensional nonlinear
 time fractional subdiffusion equation with weakly singular solutions. For the purpose of conducting a pointwise convergence analysis using the comparison principle, we develop a new stability result.  The global $L^2$-norm convergence order is $\text{min}\{\alpha r,2\}$, and the local $L^2$-norm convergence order is $\text{min}\{r,2\}$ under appropriate conditions and assumptions. Ultimately, the rates of convergence demonstrated by the numerical experiments serve to validate the analytical outcomes.
		\end{abstract}
		
		\begin{keyword}
			Alikhanov scheme\sep Pointwise-in-time error estimate\sep Nonlinear problem\sep Quasi-graded mesh
			
			
			\MSC[2020] Primary 65M15, 65M06
		\end{keyword}
		
	\end{frontmatter}
	
	
	
	\section{Introduction}
In this work, the following fractional parabolic problem is considered:
\begin{align}
	\label{eq1.1}
	& D_{t}^{\alpha }u(\bm{x},t)+Lu(\bm{x},t)=f(u(\bm{x},t)),\ (\bm{x},t)\in \Omega\times \left(0,T\right],\\
	&u(\bm{x},0)=u_0(\bm{x}),\ \bm{x}\in \Omega,\\
	&u(\bm{x},t)=0,\ (\bm{x},t)\in \partial\Omega\times \left[0,T\right],
	\label{eq1.3}
\end{align}
where the Caputo fractional derivative $D_{t}^{\alpha }$ with respect to $t$ is defined by  $ D_{t}^{\alpha}u(\bm{x},t)=\frac{1}{\Gamma (1-{{\alpha }})}\int_{0}^{t}{{{(t-s)}^{-{{\alpha }}}}\frac{\partial u(\bm{x},s)}{\partial s}ds}$, $\Omega\subset \mathbb{R}^2$ is an
open bounded spatial domain $(0,L_1)\times(0,L_2)$ with a boundary $\partial\Omega$, $u_0(\bm{x})$ is a continuous function in $\bar\Omega$. The operator $Lu$ denotes $-\nu\Delta u(\nu>0)$.

In recent years, fractional calculus has been incorporated into models because of its ability to capture long-term memory effects\cite{MR3051755,MR4485803,MR3231767}.  Liao et al. \cite{MR3790081} obtained the temporal global convergence of the L1 scheme for the linear reaction-subdiffusion equation by applying the Gronwall inequality. The linear reaction-subdiffusion problem serves as a particular case, i.e., $f(u)=cu$ in \eqref{eq1.1}.
Kopteva \cite{MR4129005} discretized the Caputo derivative term using the L1 scheme on quasi-graded temporal meshes with degree of grading $r$ and obtained a fully discrete scheme for the nonlinear fractional parabolic equation, which has a local convergence order $\text{min}\{r,2-\alpha\}$ and a global convergence order $\text{min}\{\alpha r,2-\alpha\}$ by employing the comparison principle. However, the nonlinear term $f(u)$ must satisfy the one-sided Lipschitz condition. Under the assumption that \( f \in C^1(\mathbb{R}) \), Li et al. \cite{MR4729054} linearized the nonlinear term and employed mathematical induction to derive the pointwise error estimates.

Compared with the L1 scheme, the Alikhanov scheme  exhibits a higher order of convergence. This scheme has been widely applied\cite{MR4453273,MR4863143}. The L-type discrete formula involves a longer computational time, hence some researchers have proposed fast algorithms to overcome the issue of computational complexity. Cen and Wang  \cite{MR4372681} employed Alikhanov scheme and the two-grid technique to discretize nonlinear subdiffusion problems satisfying the Lipschitz condition. However, these three works impose stronger assumptions on the smoothness of the solution and are unable to address problems where the solution exhibits a weak singularity at the initial time. Chen et al. \cite{MR4338910} used an improved discrete fractional Gronwall inequality. It has been proven that on graded meshes, the error in the $H^1$-norm of the Alikhanov scheme is robust with respect to the order $\alpha$ for linear fractional subdiffusion equation. An et al. \cite{MR4434375} applied  the L1 scheme and Alikhanov scheme on graded meshes in temporal direction, and utilized FEM in space for linear fractional superdiffusion equation. Then they obtain that the $H^1$-norm analysis of error estimates is $\alpha$-robust. Liu et al. \cite{MR4544723} used the Newton linearization method to approximate the nonlinear term, Alikhanov scheme with sum-of-exponentials approximation to accelerate the computation of Caputo derivative and Galerkin finite element method in spatial discretization. They obtained the global error estimate of the scheme for nonlinear subdiffusion equations on the nonuniform mesh.  Liao et al. \cite{MR4270344} conducted an error convolution structure analysis for the Alikhanov scheme and derived a sharp \(L^2\)-norm global error estimate \(\text{max}\|e^n\|\le C\tau^{\min\{2,\alpha r\}}\) on the nonuniform mesh for linear reaction-subdiffusion problems, where \(\tau\) is the maximum step size. In \cite{MR4092654}, it was demonstrated that the proposed Alikhanov scheme in the paper preserves the maximum principle for the Allen-Cahn equation. Huang et al. \cite{MR4402734} proved a new  discrete fractional Gronwall inequality, and utilized it to demonstrate that the error of the Allen-Cahn equation discretized by the Alikhanov-FEM scheme on a graded mesh is $\alpha$-robust in the $H^1$-norm.  Although the high convergence order of the Alikhanov scheme has attracted the interest of researchers, the above studies have only analyzed the global error estimates. In 2020, Kopteva and Meng \cite{MR4085134} considered the discretization of linear fractional parabolic equations using the L1 and  Alikhanov schemes. They employed barrier functions to derive a crucial stability result, which subsequently enabled them to obtain pointwise error estimates in the temporal direction for the L1 and Alikhanov  schemes.  Currently, there is a lack of pointwise error estimates for nonlinear subdiffusion equations discretized via the Alikhanov scheme. This work is designed to fill this gap.

Since the convergence orders of the L1 scheme and the Alikhanov scheme are identical on a uniform mesh, our primary interest lies in non-uniform meshes. The analytical work in this paper is conducted under the assumption  $f(u)\in C^2(\mathbb{R})$ and Assumption \ref{assumption1.1}. Given the Theorem 3.1 of \cite{MR3742688}, inequality \eqref{eq1.4} is reasonable for $k=1$. The results similar to inequality \eqref{eq1.4} are proven in the Lemmas 2.4 and 2.6 of \cite{MR4122156} for Allen-Cahn equation. We first discretize \eqref{eq1.1}--\eqref{eq1.3} using the Alikhanov scheme in time and a standard second-order approximation in space. Then we utilize a Newton linearized method for the nonlinear term. Our key contribution lies in obtaining a novel stability result for Alikhanov scheme under some conditions:
 \begin{align}
 	\label{keyeq}
 	\begin{cases}
 		(\delta_t^{\alpha,*}-\lambda_1)  v^j-\lambda_2 v^{j-1}\lesssim (\tau_1/t_j)^{\gamma+1}\\
 		\forall\ j\ge 1, v^0=0
 	\end{cases}\Rightarrow  v^j\lesssim V(j,\gamma):=\tau_1 t_j^{\alpha-1}(\tau_1/t_j)^{\min\{0,\gamma\}}.
 \end{align}
 In 2025, Kopteva et al. proposed an inequality analogous to \eqref{keyeq}, with their research focusing on the L1 scheme\cite{2025arXiv250612954K}. Finally we provide the pointwise-in-time error estimates and stability analysis. We find that a much milder grading $r\ge 2$ is sufficient to achieve the optimal convergence order 2 when $t$ is away from 0. This  provides sharper error estimates for such equations.
\begin{assumption}
		\label{assumption1.1}
The solution $u$ of (1.1)-(1.3) satisfies  $u(\cdot,t)\in C^4(\bar\Omega)$ for $t\ge 0$, and
	\begin{align}	
		\label{eq1.4}
		\left| \frac{{{\partial }^{k}}u(\bm{x},t)}{\partial {{t}^{k}}} \right|+\left|\frac{{{\partial }^{k}}\Delta u(\bm{x},t)}{\partial {{t}^{k}}}\right|\le C(1+{{t}^{{{\alpha }}-k}}),\ k=0,1,2,3,
	\end{align}
for $\bm{x}\in \bar{\Omega}$, and $0<t\le T$.
\end{assumption}
This paper consists of the following sections. In Section 2, we construct a fully discrete scheme. In Section 3, the corresponding truncation errors are estimated. In Section 4, the comparison principle and a key stability result for Alikhanov scheme are proved. In Section 5, the convergence and stability of the fully discrete, linearized scheme are analyzed, and we present the sharp pointwise-in-time error estimates. In Section 6, the theoretical analysis is validated through numerical examples.

\textbf{Notation.}
In this paper, the positive constant $C$ may vary from line to line, and is independent of the time steps and the spatial steps. $a\lesssim b$ indicates that there exists a positive constant $C$ such that $a\le Cb$. Besides, $a\simeq b$ means that $a\lesssim b$ and $b\lesssim a$.

\section{Fully discrete scheme on a quasi-graded mesh}
The domain \(\Omega\) is uniformly partitioned into a grid which consists of \(M_1 \times M_2\) cells. The spatial step sizes are defined as \(h_1 = L_1/M_1\) and \(h_2 = L_2/M_2\). The set of all grid points is denoted by $\Omega' = \{(x_i, y_j)= (i h_1, j h_2),\ i = 0, \dots, M_1 \ and\ j = 0, \dots, M_2\}$, with \(\Omega_h = \Omega' \cap \Omega\) representing the interior nodes and \(\partial\Omega_h = \Omega' \cap \partial\Omega\) the boundary nodes.
The time interval \([0, T]\) is divided using a quasi-graded mesh \(\{t_n\}_{n=0}^N\) with the degree of grading \(r \ge 1\). The time steps \(\tau_j = t_j - t_{j-1}\) may be non-uniform, with \(\tau = \max_j \tau_j\) denoting the maximum step size. $\rho$ denotes $\max_{1\le j\le N-1}{\tau_j/\tau_{j+1}}$. The temporal mesh satisfies
\begin{align}
	\tau_1\simeq N^{-r},\ t_n\simeq\tau_1n^r,\ \tau_n\simeq\tau_1^{1/r}t_n^{1-1/r}.
\end{align}

At $t_n^*=t_n-(1-\sigma)\tau_n$, ($1\ge\sigma\ge0$), applying the Alikhanov's formula, $D_t^\alpha u$  can be approximated as
\begin{align*}
	D_t^\alpha u(x_i,y_j,t_n^*)&=\frac{1}{\Gamma(1-\alpha)}\int_{0}^{t_n^*}{(t_n^*-s)^{-\alpha}\partial_su(x_i,y_j,s)ds}\\
	&=\frac{1}{\Gamma(1-\alpha)}\sum_{k=1}^{n-1}\int_{t_{k-1}}^{t_{k}}{(t_n^*-s)^{-\alpha}\partial_su(x_i,y_j,s)ds} \\
&\quad +\frac{1}{\Gamma(1-\alpha)}\int_{t_{n-1}}^{t_n^*}{(t_n^*-s)^{-\alpha}\partial_su(x_i,y_j,s)ds}\\
	&\approx \frac{1}{\Gamma(1-\alpha)}\sum_{k=1}^{n-1}\int_{t_{k-1}}^{t_{k}}{(t_n^*-s)^{-\alpha}\partial_s\Pi_{2,k} u(x_i,y_j,s)ds}\\
&\quad +\frac{1}{\Gamma(1-\alpha)}\int_{t_{n-1}}^{t_n^*}{(t_n^*-s)^{-\alpha}\partial_s\Pi_{1,n}u(x_i,y_j,s)ds}\\
	&=\sum_{k=0}^{n}p_{n,k}u(x_i,y_j,t_k)\\
	&:=\delta_t^{\alpha,*}u_{i,j}^n,
\end{align*}
where $\Pi_{2,j}u(s)$ is the standard Lagrange interpolation polynomial passes through points $(t_{j-1},u(t_{j-1}) )$, $(t_j,u(t_j))$, and $ (t_{j+1},u(t_{j+1}))$, while the interpolation nodes for $\Pi_{1,n}u(s)$ are $t_{n-1}$ and $ t_n$. Using linear interpolation, the following approximation can be obtained
\begin{equation}
	\begin{split}
		u(x_i,y_j,t_n^*)&=u(x_i,y_j,\sigma t_n+(1-\sigma )t_{n-1})\\
		&\approx \sigma u(x_i,y_j,t_n)+(1-\sigma ) u(x_i,y_j,t_{n-1})\\
		&:=u_{i,j}^{n,*}.
	\end{split}
\end{equation}

Defining $u_{i,j}^{n}=u(x_i,y_j,t_n)$, the previous equation \eqref{eq1.1}-\eqref{eq1.3} can be transformed into
\begin{align}
	\label{eq2.1}
	\delta_{t}^{\alpha,*}u_{i,j}^n+L_hu_{i,j}^{n,*}&=f(u_{i,j}^{n-1})+\sigma f'(u_{i,j}^{n-1})(u_{i,j}^n-u_{i,j}^{n-1})\notag\\&+(r_1)_{i,j}^n+(r_2)_{i,j}^n+(r_3)_{i,j}^n,	(x_i,y_j)\in \Omega_h, t_n\in(0,T]\\
	u_{i,j}^0&=u_0(x_i,y_j), (x_i,y_j)\in \Omega_h\\
	u_{i,j}^n&=0, (x_i,y_j)\in \partial\Omega_h,  t_n\in[0,T],
	\label{eq2.3}
\end{align}
where $(r_1)_{i,j}^n=\delta_{t}^{\alpha,*}u_{i,j}^n-D_t^\alpha u(x_i,y_j,t_n^*)$, $(r_2)_{i,j}^n=L_hu_{i,j}^{n,*}-Lu(x_i,y_j,t_n^*)$ and $(r_3)_{i,j}^n=f(u(x_i,y_j,t_n^*))-f(u_{i,j}^{n-1})-\sigma f'(u_{i,j}^{n-1})(u_{i,j}^n-u_{i,j}^{n-1})$. $L_hu$ is a standard second-order approximation of $Lu$, i.e., $u_{xx}(x_i,y_j,t_n)\approx \frac{1}{h_1^2}(u_{i+1,j}^n-2u_{i,j}^n+u_{i-1,j}^n)$ and $u_{yy}(x_i,y_j,t_n)\approx\frac{1}{h_2^2}(u_{i,j+1}^n-2u_{i,j}^n+u_{i,j-1}^n)$.

Leaving out the truncation errors, a fully discrete scheme can be constructed at points $(x_i,y_j,t_n^*)$, i.e.,
\begin{align}
	\label{eq2.4}
	\delta_{t}^{\alpha,*}U_{i,j}^n+L_hU_{i,j}^{n,*}&=f(U_{i,j}^{n-1})+\sigma f'(U_{i,j}^{n-1})(U_{i,j}^n-U_{i,j}^{n-1}),	(x_i,y_j)\in \Omega_h, t_n\in(0,T]\\
	U_{i,j}^0&=u_0(x_i,y_j), (x_i,y_j)\in \Omega_h\\
	U_{i,j}^n&=0, (x_i,y_j)\in \partial\Omega_h,  t_n\in[0,T],
	\label{eq2.6}
\end{align}
where $U_{i,j}^n$ is a grid function at $(x_i,y_j,t_n)$.
\begin{lemma}\cite[Sect. 2]{MR3936261}
	\label{lemma2.1}
	$\delta_t^{\alpha,*}U^m$ can be represented as
	$\delta_t^{\alpha,*}U^m=\sum_{j=1}^{m}{g_{m-1,j-1}(U^j-U^{j-1})}$ as well. The coefficient  $g_{0,0}=\tau_1^{-1}a_{0,0}$ and when $k\ge 1$ coefficients
	\begin{equation}
		\begin{split}
			g_{k,j}=\begin{cases}
				\tau_{j+1}^{-1}(a_{k,0}-b_{k,0}),\quad &if\ j=0,\\
				\tau_{j+1}^{-1}(a_{k,j}+b_{k,j-1}-b_{k,j}),\quad &if\ 1\le j\le k-1,\\
				\tau_{j+1}^{-1}(a_{k,k}+b_{k,k-1}),\quad &if\ j=k.\\
			\end{cases}
		\end{split}
	\end{equation}
	with $a_{k,k}=\frac{\sigma^{1-\alpha}}{\Gamma(2-\alpha)}\tau_{k+1}^{1-\alpha}$ for $k\ge0$, $a_{k,j}=\frac{1}{\Gamma(1-\alpha)}\int_{t_j}^{t_{j+1}}{(t_{k+1}^*-\eta)^{-\alpha}d\eta}$,\\ $b_{k,j}=\frac{1}{\Gamma(1-\alpha)}\frac{2}{t_{j+2}-t_j}\int_{t_j}^{t_{j+1}}{(t_{k+1}^*-\eta)^{-\alpha}(\eta-t_{j+1/2})d\eta}$ for $k\ge 1$, $0\le j\le k-1$.
\end{lemma}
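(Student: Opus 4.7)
The plan is to expand $\delta_t^{\alpha,*}U^m$ from its integral definition in terms of the forward slopes $\delta_j := (U^j-U^{j-1})/\tau_j$, and then collect coefficients of each $\delta_j$ to match the claimed expression for $g_{m-1,j-1}$.

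First I would handle the linear piece on $[t_{m-1},t_m^*]$: since $\Pi_{1,m}U$ interpolates at $t_{m-1}$ and $t_m$, its derivative is the constant $\delta_m$, and direct integration of $(t_m^*-s)^{-\alpha}$ over $[t_{m-1},t_m^*]$, using $t_m^*-t_{m-1}=\sigma\tau_m$, gives the contribution $\tfrac{\sigma^{1-\alpha}\tau_m^{1-\alpha}}{\Gamma(2-\alpha)}\,\delta_m = a_{m-1,m-1}\,\delta_m$. Next, for each quadratic piece $\Pi_{2,k}$ on $[t_{k-1},t_k]$ with $1\le k\le m-1$, I would use the Newton divided-difference representation to obtain $\partial_s\Pi_{2,k}U(s)=\delta_k+\frac{2(s-t_{k-1/2})}{t_{k+1}-t_{k-1}}(\delta_{k+1}-\delta_k)$. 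Substituting this into $\frac{1}{\Gamma(1-\alpha)}\int_{t_{k-1}}^{t_k}(t_m^*-s)^{-\alpha}\partial_s\Pi_{2,k}U(s)\,ds$, the two resulting integrals match exactly the definitions of $a_{m-1,k-1}$ and $b_{m-1,k-1}$ (identifying $j=k-1$, so $t_{j+2}-t_j=t_{k+1}-t_{k-1}$ and $t_{j+1/2}=t_{k-1/2}$), so this piece contributes $a_{m-1,k-1}\delta_k+b_{m-1,k-1}(\delta_{k+1}-\delta_k)$.

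Summing over the subintervals yields $\delta_t^{\alpha,*}U^m=\sum_{k=1}^{m-1}\bigl[a_{m-1,k-1}\delta_k+b_{m-1,k-1}(\delta_{k+1}-\delta_k)\bigr]+a_{m-1,m-1}\delta_m$. I would then collect the coefficient of each $\delta_j$: for $2\le j\le m-1$ the slope $\delta_j$ picks up $a_{m-1,j-1}-b_{m-1,j-1}$ from the $k=j$ piece and $+b_{m-1,j-2}$ from the $k=j-1$ piece; the endpoint $\delta_1$ receives only $a_{m-1,0}-b_{m-1,0}$ (no $k=0$ piece exists); and $\delta_m$ receives $b_{m-1,m-2}$ from $k=m-1$ together with $a_{m-1,m-1}$ from the linear piece. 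Finally, writing $\delta_j=\tau_j^{-1}(U^j-U^{j-1})$ introduces the prefactor $\tau_j^{-1}$ and reproduces exactly the three-case formula for $g_{m-1,j-1}$. The degenerate case $m=1$ has no quadratic pieces, leaving only $a_{0,0}\delta_1=\tau_1^{-1}a_{0,0}(U^1-U^0)$, which gives $g_{0,0}=\tau_1^{-1}a_{0,0}$.

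The main obstacle will be the index bookkeeping: one must carefully pair the $-b_{m-1,j-1}\delta_j$ arising from $\Pi_{2,j}$ (via $-\delta_k$ in the correction term at $k=j$) with the $+b_{m-1,j-2}\delta_j$ arising from $\Pi_{2,j-1}$ (via $+\delta_{k+1}$ in the correction term at $k=j-1$) to uncover the interior coefficient $a_{m-1,j-1}+b_{m-1,j-2}-b_{m-1,j-1}$; and the two boundary indices $j=1$ and $j=m$ must be treated separately because one of the adjacent quadratic pieces is absent, which explains the missing $b$-term in $g_{k,0}$ and $g_{k,k}$.
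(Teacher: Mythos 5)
Your derivation is correct: the expansion in terms of the slopes $\delta_j=(U^j-U^{j-1})/\tau_j$, the Newton-form identity $\partial_s\Pi_{2,k}U(s)=\delta_k+\tfrac{2(s-t_{k-1/2})}{t_{k+1}-t_{k-1}}(\delta_{k+1}-\delta_k)$, and the regrouping of the $a$- and $b$-integrals reproduce exactly the three-case formula for $g_{m-1,j-1}$, including the boundary cases $j=1$, $j=m$ and the degenerate case $m=1$. The paper itself gives no proof but cites this representation from the reference, where it is obtained by essentially the same computation, so nothing further is needed.
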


\begin{lemma}
	\label{lemma2.2}
If $\sigma=1-\frac{\alpha}{2}$, and $\rho_{j-1}\ge\rho_j\ge0.618$ ($\rho_j=\tau_{j+1}/\tau_j$) for $2\le j\le N-1$, the coefficients in Lemma \ref{lemma2.1} have the following properties: \\
P1: $g_{m-1,j}>g_{m-1,j-1}>0$, for $1\le j\le m-1\le N-1$.\\
P2: $(2\sigma-1)g_{m-1,m-1}> \sigma g_{m-1,m-2}$, for $2\le m\le N$.\\
P3: There exists a constant $\pi_A>0$, such that
\begin{align}
	\label{eq2.7}
	g_{m-1,j-1}\ge \frac{1}{\pi_A\tau_j}\int_{t_{j-1}}^{t_j}{\frac{(t_m-s)^{-\alpha}}{\Gamma(1-\alpha)}ds},\ for\ 1\le j\le m\le N.
\end{align}
P4: There exists a constant $\rho>0$, such that $\tau_j/\tau_{j+1}\le \rho$, for $1\le j\le N-1$.
\end{lemma}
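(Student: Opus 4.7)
The plan is to unpack the explicit expressions for $a_{k,j}$ and $b_{k,j}$ from Lemma \ref{lemma2.1} and verify each property by direct manipulation, exploiting two structural facts: the Abel kernel $(t_{k+1}^*-\eta)^{-\alpha}$ is strictly positive and monotonically decreasing in $\eta$, and the mesh ratios obey $\rho_{j-1}\ge\rho_j\ge 0.618$. The value $0.618$ is essentially the reciprocal of the golden ratio, and in Alikhanov-type analysis it is the sharp threshold at which the quadratic correction terms $b_{k,j-1}-b_{k,j}$ can be absorbed by the leading contribution $a_{k,j}$ while preserving the correct sign, so the choice is dictated by the algebra and cannot be weakened much.

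For P1, I would first dispatch the boundary case $g_{m-1,0}=\tau_1^{-1}(a_{m-1,0}-b_{m-1,0})>0$ by rewriting $a_{m-1,0}-b_{m-1,0}$ as a single integral of the decreasing Abel kernel against an affine weight whose positivity on $[t_0,t_1]$ is automatic from the centering of $b_{m-1,0}$ at $t_{1/2}$. For interior indices $1\le j\le m-2$, the difference $g_{m-1,j}-g_{m-1,j-1}$, after clearing the prefactors $\tau_{j+1}^{-1}$ and $\tau_j^{-1}$, can be expressed as an integral of the Abel kernel against a piecewise linear weight whose coefficients are exactly where the ratio bound $\rho_j\ge 0.618$ enters. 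For P2, substituting $\sigma=1-\alpha/2$ gives $2\sigma-1=1-\alpha$, and since $a_{m-1,m-1}=\tfrac{\sigma^{1-\alpha}}{\Gamma(2-\alpha)}\tau_m^{1-\alpha}$ is fully explicit, the inequality reduces to a comparison between one integral of the Abel kernel on $[t_{m-2},t_m^*]$ and another on $[t_{m-2},t_{m-1}]$, which is again controlled by $\rho_{m-1}\ge 0.618$ and the monotone decay of the kernel.

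For P3, I would observe that the L1-type quadrature $\tau_j^{-1}\int_{t_{j-1}}^{t_j}(t_m-s)^{-\alpha}ds/\Gamma(1-\alpha)$ is the same functional as the Alikhanov local weight but with the quadratic interpolant replaced by a linear one. Since the Abel kernel is a fixed positive measure on each subinterval, the ratio of the two weights depends only on $\alpha$ and on the local mesh ratios, and the uniform constant $\pi_A$ can be taken as the worst-case ratio, which is finite once P4 is in hand. Property P4 is immediate from $\rho_j\ge 0.618$: it gives $\tau_j/\tau_{j+1}=1/\rho_j\le 1/0.618$, so one can take $\rho=1/0.618$.

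The hard part will be the sign tracking in P1 and the sharp numerical comparison in P2: the Abel-kernel term $a_{k,j}$ and the signed correction $b_{k,j-1}-b_{k,j}$ must balance in the right direction, and since $0.618$ is sharp a crude bound will not suffice, so one has to carry out the discriminant-style analysis that lies at the technical heart of Alikhanov-type schemes on nonuniform meshes. My plan is to follow the template of \cite{MR3936261}, checking only that the fixed choice $\sigma=1-\alpha/2$ together with the monotone decrease $\rho_{j-1}\ge\rho_j$ built into the quasi-graded mesh preserves every inequality in that chain.
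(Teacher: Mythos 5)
Your overall strategy is sound, but it is worth noting that the paper does not actually carry out any of the computations you propose: its proof of this lemma is purely citation-based. P1 and P2 are quoted from Lemma~4 of \cite{MR3936261} (the very template you name at the end), P3 is obtained by checking that $\rho_j\ge 0.618$ gives $\tau_j/\tau_{j+1}\le (0.618)^{-1}\le 7/4$, which is assumption M1 of Theorem~2.2 in \cite{MR4270344} and yields the explicit constant $\pi_A=11/4$, and P4 is the same ratio bound. So your plan amounts to reproducing, in self-contained form, the arguments the paper outsources; if executed it would buy a more readable, reference-free proof at the cost of the lengthy sign-tracking you yourself flag as the hard part.

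Two concrete corrections to your sketch. First, your claim that $0.618$ is the sharp threshold for the sign structure in P1 is contradicted by the paper itself, which records that P1 holds under the weaker condition $\rho_{j-1}\ge\rho_j\ge 0.4656$ (and $1\ge\sigma\ge 1-\alpha/2$); the value $0.618$ is what is needed for the additional properties, not for the monotonicity and positivity in P1, so your ``discriminant-style'' analysis for P1 should not be organized around that constant. Second, your P3 argument is the weakest link: the comparison is not between the Alikhanov weight and an L1 weight with the same kernel, because the right-hand side of \eqref{eq2.7} uses the kernel $(t_m-s)^{-\alpha}$ centered at $t_m$ while $g_{m-1,j-1}$ is built from $(t_m^*-\eta)^{-\alpha}$ centered at $t_m^*<t_m$, and the signed corrections $b_{m-1,j-2}-b_{m-1,j-1}$ must be absorbed before any ``worst-case ratio'' makes sense. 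Asserting that this ratio ``is finite once P4 is in hand'' is not a proof that a single constant $\pi_A$ works uniformly in $j$ and $m$; you would need to reproduce the quantitative estimate of \cite{MR4270344} (or cite it, as the paper does) to get $\pi_A=11/4$. Your P4 with $\rho=1/0.618$ is correct and matches the paper.
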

\begin{proof}
 For the detailed proofs that properties P1 and P2 hold, please refer to Lemma 4 of \cite{MR3936261}. Specifically, the conditions for P1 is weaker, requiring only $1\ge\sigma\ge1-\frac{\alpha}{2}$, and $\rho_{j-1}\ge\rho_j\ge0.4656$ ($\rho_j=\tau_{j+1}/\tau_j$) for $j\ge2$.
 According to the condition $\rho_j\ge 0.618$, it follows that $\tau_{j}/ \tau_{j+1}\le (0.618)^{-1}\le 7/4$ for $j\ge 1$. Therefore, the assumption M1 in Theorem 2.2 of \cite{MR4270344} is satisfied. There exists a constant $\pi_A=\frac{11}{4}$ such that the inequality \eqref{eq2.7} holds.
\end{proof}
\section{Truncation error estimates}
Suppose $\sigma=1-\frac{\alpha}{2}$, the approximation of the Caputo derivative is given by the Alikhanov scheme. Next, we estimate the truncation error for the case $\sigma=1-\frac{\alpha}{2}$.
\begin{lemma}(\cite{MR4085134}, Theorem 4.4)
	\label{lemma2.3}
	For $(x_i,y_j)\in \Omega_h$ and $1\le n\le N$, we have
	\begin{align*}
		\left|(r_1)_{i,j}^n\right|\le C(\tau_1/t_n)^{\hat\gamma+1},\ where\  \hat\gamma+1:=min\{\alpha+1,(3-\alpha)/r\}.
	\end{align*}
\end{lemma}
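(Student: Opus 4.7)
The plan is to decompose $(r_1)_{i,j}^n$ following the two-part definition of the Alikhanov approximation and bound each piece as an interpolation remainder integrated against the Caputo kernel $(t_n^*-s)^{-\alpha}/\Gamma(1-\alpha)$. Explicitly,
\begin{equation*}
(r_1)_{i,j}^n = \frac{1}{\Gamma(1-\alpha)}\sum_{k=1}^{n-1}\int_{t_{k-1}}^{t_k}(t_n^*-s)^{-\alpha}\partial_s(\Pi_{2,k}u - u)(s)\,ds + \frac{1}{\Gamma(1-\alpha)}\int_{t_{n-1}}^{t_n^*}(t_n^*-s)^{-\alpha}\partial_s(\Pi_{1,n}u - u)(s)\,ds.
\end{equation*}
For the quadratic remainder on $[t_{k-1},t_k]$ with $k\ge 2$, the classical divided-difference bound gives $|\partial_s(\Pi_{2,k}u-u)(s)|\lesssim \tau_{k+1}^{2}\max_{[t_{k-1},t_{k+1}]}|\partial_t^3 u|\lesssim \tau_{k+1}^{2}\,t_{k-1}^{\alpha-3}$ by \eqref{eq1.4}, while for the final linear piece $|\partial_s(\Pi_{1,n}u-u)(s)|\lesssim \tau_n\max_{[t_{n-1},t_n]}|\partial_t^2 u|\lesssim \tau_n\,t_{n-1}^{\alpha-2}$ for $n\ge 2$, with the case $n=1$ treated as a direct first-interval estimate.

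Substituting these bounds, multiplying by the weight, and summing over $k$ using the graded-mesh relations $\tau_j\simeq\tau_1^{1/r}t_j^{1-1/r}$ and $t_j\simeq\tau_1 j^r$ yields two competing contributions. The smooth-region sum $\sum_{k=2}^{n-1}\tau_{k+1}^{2}\,t_{k-1}^{\alpha-3}\int_{t_{k-1}}^{t_k}(t_n^*-s)^{-\alpha}ds$ collapses, after rewriting indices as $j/n$, to a factor of order $(\tau_1/t_n)^{(3-\alpha)/r}$; the initial-interval contribution together with the final-subinterval contribution control the residual singularity and produce a factor of order $(\tau_1/t_n)^{\alpha+1}$. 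Taking the minimum of these two exponents gives $\hat\gamma+1=\min\{\alpha+1,(3-\alpha)/r\}$, which is the asserted bound.

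The main obstacle is the initial subinterval $k=1$, where the remainder of $\Pi_{2,1}u$ on $[0,\tau_1]$ cannot be bounded pointwise by $\max|\partial_t^3 u|$ because of the singularity $u_t\sim t^{\alpha-1}$. The standard remedy is to rewrite this subinterval's contribution through a Peano-kernel representation (or successive integration by parts) so that the remainder only involves $\int_0^{t_2}|\partial_t^2 u|\lesssim t_2^{\alpha-1}$, which is integrable at the origin. Since Theorem 4.4 of Kopteva and Meng \cite{MR4085134} carries out exactly this argument under hypotheses equivalent to our \eqref{eq1.4}, after checking that our derivative bounds meet their structural requirements I would invoke their theorem directly to conclude.
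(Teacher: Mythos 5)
The paper gives no proof of this lemma at all---it is stated purely as a citation of Theorem 4.4 in Kopteva and Meng \cite{MR4085134}---and your argument ultimately rests on invoking that same theorem, so the two approaches coincide. Your preliminary sketch is a reasonable outline of what the cited proof does (a fully self-contained version would also need the integration-by-parts/cancellation step on the interior intervals, not only on the first subinterval, to reach the stated exponents), but since you check that Assumption \ref{assumption1.1} matches the regularity hypotheses of \cite{MR4085134} and then defer to their result, the conclusion stands.
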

\begin{lemma}
	\label{lemma2.4}
	For $(x_i,y_j)\in \Omega_h$ and $1\le n\le N$, it holds that
	$\left|(r_3)_{i,j}^n\right|\le C\tau_n^2t_n^{\alpha-2}$.
\end{lemma}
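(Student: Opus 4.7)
The plan is to apply Taylor's theorem to $f$ about $u_{i,j}^{n-1}$ so as to expose the leading part of $(r_3)_{i,j}^n$ as the linear-interpolation error of $u$ at $t_n^*$, and then bound the two resulting pieces separately using Assumption \ref{assumption1.1}, treating $n=1$ by hand because of the weak singularity at $t=0$.

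Concretely, I would first write, with $u^n:=u(x_i,y_j,t_n)$ and some $\xi$ between $u^{n-1}$ and $u(x_i,y_j,t_n^*)$,
\begin{align*}
f(u(x_i,y_j,t_n^*)) = f(u^{n-1}) + f'(u^{n-1})\bigl[u(x_i,y_j,t_n^*)-u^{n-1}\bigr] + \tfrac12 f''(\xi)\bigl[u(x_i,y_j,t_n^*)-u^{n-1}\bigr]^2,
\end{align*}
and combine with the definition of $(r_3)_{i,j}^n$ and of $u_{i,j}^{n,*}=\sigma u^n+(1-\sigma)u^{n-1}$ to get the clean identity
\begin{align*}
(r_3)_{i,j}^n = f'(u^{n-1})\bigl[u(x_i,y_j,t_n^*)-u_{i,j}^{n,*}\bigr] + \tfrac12 f''(\xi)\bigl[u(x_i,y_j,t_n^*)-u^{n-1}\bigr]^2.
\end{align*}
Assumption \ref{assumption1.1} (applied with $k=0$) gives a uniform bound $|u|\le C$ on $\bar\Omega\times[0,T]$, so $f\in C^2(\mathbb{R})$ yields $|f'(u^{n-1})|+|f''(\xi)|\le C$, and it remains to estimate the bracketed quantities.

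For the first bracket, $u(x_i,y_j,t_n^*)-u_{i,j}^{n,*}$ is exactly the error of the linear interpolant of $s\mapsto u(x_i,y_j,s)$ at the off-grid point $t_n^*=\sigma t_n+(1-\sigma)t_{n-1}\in(t_{n-1},t_n)$. For $n\ge 2$ the standard Peano/Taylor remainder gives
\begin{align*}
\bigl|u(x_i,y_j,t_n^*)-u_{i,j}^{n,*}\bigr| \le \tfrac{\sigma(1-\sigma)}{2}\tau_n^2\,\max_{s\in[t_{n-1},t_n]}|\partial_s^2 u(x_i,y_j,s)|\lesssim \tau_n^2\,t_{n-1}^{\alpha-2}\simeq \tau_n^2 t_n^{\alpha-2},
\end{align*}
where I use Assumption \ref{assumption1.1} with $k=2$ and $t_{n-1}\simeq t_n$ for $n\ge 2$ on the quasi-graded mesh. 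For the second bracket I would just write $u(x_i,y_j,t_n^*)-u^{n-1}=\int_{t_{n-1}}^{t_n^*}\partial_s u\,ds$, use $|\partial_s u|\le C t_{n-1}^{\alpha-1}$ (again by Assumption \ref{assumption1.1}, $k=1$) to obtain
\begin{align*}
\bigl[u(x_i,y_j,t_n^*)-u^{n-1}\bigr]^2 \lesssim \tau_n^2\,t_{n-1}^{2\alpha-2}\lesssim \tau_n^2\,t_n^{\alpha-2},
\end{align*}
the last inequality since $t_n^\alpha$ is bounded on $[0,T]$.

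The main obstacle is the case $n=1$, where $t_0=0$ and $u''$ is not integrable in $[0,t_1]$, so the Taylor remainder form of the interpolation error is not directly available. To circumvent this, I would write
\begin{align*}
u(x_i,y_j,t_1^*)-u_{i,j}^{1,*}=(1-\sigma)\!\int_0^{\sigma t_1}\!\partial_s u(x_i,y_j,s)\,ds-\sigma\!\int_{\sigma t_1}^{t_1}\!\partial_s u(x_i,y_j,s)\,ds,
\end{align*}
and similarly $u(x_i,y_j,t_1^*)-u^0=\int_0^{\sigma t_1}\partial_s u\,ds$. The bound $|\partial_s u|\le C(1+s^{\alpha-1})$ is $L^1$ near $0$, giving $|\cdot|\lesssim t_1^\alpha$ for both quantities; using $\tau_1=t_1$ this is exactly $\tau_1^2 t_1^{\alpha-2}$, and squaring keeps us under the same bound since $t_1^{2\alpha}\le T^\alpha t_1^\alpha$. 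Combining the cases $n=1$ and $n\ge 2$ yields the claimed estimate $|(r_3)_{i,j}^n|\le C\tau_n^2 t_n^{\alpha-2}$.
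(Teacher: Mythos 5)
Your proposal is correct and follows essentially the same route as the paper: both arguments reduce $(r_3)_{i,j}^n$ to (a) the linear-interpolation error $u(x_i,y_j,t_n^*)-u_{i,j}^{n,*}$, bounded by $C\tau_n^2 t_n^{\alpha-2}$ via a second-derivative/Taylor bound for $n\ge 2$ and by integrating $|\partial_s u|\le C(1+s^{\alpha-1})$ for $n=1$, and (b) a squared increment of $u$ of size $O(\tau_n^2 t_n^{2\alpha-2})$, controlled by the boundedness of $f'$, $f''$ on the range of $u$. The only cosmetic difference is that you obtain this splitting from a single Taylor expansion of $f$ about $u_{i,j}^{n-1}$ (giving a clean identity), whereas the paper inserts $f(u_{i,j}^{n,*})$ and applies the mean value theorem and a Taylor expansion to the two pieces separately.
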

\begin{proof}
		Case A: $n>1$.\\
	According to the Taylor expansion, we can see that
	\begin{align*}
		u(x_i,y_j,t_n^*)&=u(x_i,y_j,t_n)+\partial_tu(x_i,y_j,t_n)(\sigma-1)(t_n-t_{n-1})\\
           &\quad \quad +\frac{1}{2}\partial_{tt}u(x_i,y_j,\xi_1)\left[(\sigma-1)(t_n-t_{n-1})\right]^2,\ \xi_1\in(t_n^*,t_n),\\
		u(x_i,y_j,t_n^*)&=u(x_i,y_j,t_{n-1})+\partial_tu(x_i,y_j,t_{n-1})\sigma(t_n-t_{n-1}) \\
&\quad \quad+\frac{1}{2}\partial_{tt}u(x_i,y_j,\xi_2)\left[\sigma(t_n-t_{n-1})\right]^2,\ \xi_2\in(t_{n-1},t_n^*).
	\end{align*}
	Then, using Lagrange mean value theorem, we can obtain that
	\begin{align*}
		\left|u(x_i,y_j,t_n^*)-u_{i,j}^{n,*}\right|&\le C\tau_n^2\left|\partial_{tt}u(x_i,y_j,\xi)\right|\\
		&\le C\tau_n^2(1+t_{n-1}^{\alpha-2})\\
		&\le C\tau_n^2(1+t_{n}^{\alpha-2})\\
		&\le  C\tau_n^2t_{n}^{\alpha-2},
	\end{align*}
	where $\xi\in(t_{n-1},t_n)$. We have used $t_n\simeq t_{n-1}$ for $n\ge2$ in the penultimate inequality.
	
	Case B: $n=1$.
	\begin{align*}
		\left|u(x_i,y_j,t_1^*)-u_{i,j}^{1,*}\right|&=\left|\sigma\left(u(x_i,y_j,t_1^*)-u_{i,j}^1\right)+(1-\sigma)\left(u(x_i,y_j,t_1^*)-u_{i,j}^0\right)\right|\\
		&=\left|\sigma\left(\int_{t_1}^{t_1^*}{\partial_su(x_i,y_j,s)ds}\right)+(1-\sigma)\left(\int_{t_0}^{t_1^*}{\partial_su(x_i,y_j,s)ds}\right)\right|\\
		&\le \sigma\int_{t_1^*}^{t_1}{\left|\partial_su(x_i,y_j,s)\right|ds}+(1-\sigma)\int_{t_0}^{t_1^*}{\left|\partial_su(x_i,y_j,s)\right|ds}\\
		&\le C\left[\sigma\int_{t_1^*}^{t_1}{1+s^{\alpha-1}ds}+(1-\sigma)\int_{t_0}^{t_1^*}{1+s^{\alpha-1}ds}\right]\\
		&\le  C(t_1+ t_1^\alpha/\alpha)\le C\tau_1^2t_1^{\alpha-2}.
	\end{align*}
	In the penultimate inequality, we have used $t_1^*\le t_1$.
	
	To sum up, we have $\left|u(x_i,y_j,t_n^*)-u_{i,j}^{n,*}\right|\le C\tau_n^2t_n^{\alpha-2}$, for $n\ge 1$.	
	Taking into account that $u$ is bounded, it follows that, for $n\ge 1$,
	\begin{align*}
		\left|(r_3)_{i,j}^n\right|&\le \left|f(u(x_i,y_j,t_n^*))-f(u_{i,j}^{n,*})\right|+\left|f(u_{i,j}^{n,*})-f(u_{i,j}^{n-1})-\sigma f'(u_{i,j}^{n-1})(u_{i,j}^n-u_{i,j}^{n-1})\right|\\
		&\le f'(\xi_3)\left|u(x_i,y_j,t_n^*)-u_{i,j}^{n,*}\right|+\left|\frac{1}{2}f''(\xi_4)\sigma^2(u_{i,j}^n-u_{i,j}^{n-1})^2\right|\\
		&\le C\tau_n^2t_n^{\alpha-2}+C\left|u_{i,j}^n-u_{i,j}^{n-1}\right|^2\\
		&\le C\tau_n^2t_n^{\alpha-2},		
	\end{align*}
	where $\xi_3$ is between $u(x_i,y_j,t_n^*)$ and $u_{i,j}^{n,*}$, $\xi_4$ is between $u_{i,j}^{n-1}$ and $u_{i,j}^{n,*}$; the last inequality is obtained by
	\begin{align*}
		|u^n-u^{n-1}|&=\begin{cases}
		|u'(\xi_5)|\tau_n\le C\tau_n(1+t_{n-1}^{\alpha-1})\le C\tau_n(1+t_{n}^{\alpha-1}),\ for\ n\ge2\\
		|\int_{t_{0}}^{t_1}{u'(s)ds}|\le C\int_{t_{0}}^{t_1}{1+s^{\alpha-1}ds}\le Ct_1^{\alpha},\ for\ n=1
		\end{cases}\\
		&\le C\tau_nt_{n}^{\alpha-1},
	\end{align*}
	where $\xi_5$ is between $t_{n-1}$ and $t_{n}$.
	
\end{proof}

\begin{lemma}
	\label{lemma2.5}
	$\left|(r_2)_{i,j}^n\right|\le C(\tau_n^2t_n^{\alpha-2}+h_1^2+h_2^2)$, 	for $(x_i,y_j)\in \Omega_h$ and $1\le n\le N$.
\end{lemma}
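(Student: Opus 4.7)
The plan is to split the truncation error additively into a purely spatial part and a purely temporal part, namely
\begin{equation*}
(r_2)_{i,j}^n = \bigl[L_h u_{i,j}^{n,*} - L u(x_i,y_j,t_n^{*,\mathrm{lin}})\bigr] + \bigl[L u(x_i,y_j,t_n^{*,\mathrm{lin}}) - L u(x_i,y_j,t_n^*)\bigr],
\end{equation*}
where $L u(x_i,y_j,t_n^{*,\mathrm{lin}}) := \sigma L u(x_i,y_j,t_n)+(1-\sigma)L u(x_i,y_j,t_{n-1})$ denotes the linear-in-time combination of the exact continuous operator $L=-\nu\Delta$ applied at the two time levels. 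This decomposition is clean because $L_h$ and $L$ are both linear and the convex combination defining $u_{i,j}^{n,*}$ commutes with them.

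For the spatial part, I would use linearity to write $L_h u_{i,j}^{n,*} - L u(x_i,y_j,t_n^{*,\mathrm{lin}}) = \sigma\bigl(L_h u_{i,j}^n - L u(x_i,y_j,t_n)\bigr) + (1-\sigma)\bigl(L_h u_{i,j}^{n-1} - L u(x_i,y_j,t_{n-1})\bigr)$. Each bracket is the standard central-difference truncation error for $-\nu(u_{xx}+u_{yy})$ at a fixed time level, so a routine fourth-order Taylor expansion in $x$ and $y$, combined with the assumption $u(\cdot,t)\in C^4(\bar\Omega)$ uniformly in $t$ from Assumption~\ref{assumption1.1}, produces the bound $C(h_1^2+h_2^2)$, with a constant independent of $n$ since $\sigma\in[0,1]$.

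For the temporal part, I would treat $L u(x_i,y_j,t_n^{*,\mathrm{lin}})-L u(x_i,y_j,t_n^*)$ as the linear-interpolation error in time of the function $t\mapsto L u(x_i,y_j,t) = -\nu\Delta u(x_i,y_j,t)$, evaluated at $t_n^*=\sigma t_n+(1-\sigma)t_{n-1}$. The key observation is that the bound \eqref{eq1.4} in Assumption~\ref{assumption1.1} already controls $|\partial_t^k \Delta u|\le C(1+t^{\alpha-k})$, so $\Delta u$ enjoys exactly the same temporal regularity as $u$ itself. I would then follow the argument of Lemma~\ref{lemma2.4} verbatim on $-\nu\Delta u$ in place of $u$: Case $n\ge 2$ uses two Taylor expansions about $t_n$ and $t_{n-1}$ followed by the bound on $\partial_{tt}\Delta u$ and $t_n\simeq t_{n-1}$ to reach $C\tau_n^2 t_n^{\alpha-2}$, while Case $n=1$ is handled directly by writing the interpolation error as an integral of $\partial_s\Delta u$ over $[0,t_1]$ and using $|\partial_s\Delta u(\cdot,s)|\le C(1+s^{\alpha-1})$ to obtain $C\tau_1^2 t_1^{\alpha-2}$.

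Combining the two contributions yields the claimed estimate $|(r_2)_{i,j}^n|\le C(\tau_n^2 t_n^{\alpha-2}+h_1^2+h_2^2)$. There is no real obstacle here; the only subtlety is recognizing that Assumption~\ref{assumption1.1} was stated so that $\Delta u$ satisfies the very same time-regularity bounds as $u$, which is precisely what lets the temporal half of the proof mirror Lemma~\ref{lemma2.4} without any extra work.
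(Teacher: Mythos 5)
Your proposal is correct and follows essentially the same route as the paper: the paper likewise splits $(r_2)_{i,j}^n$ into $\left|L_hu_{i,j}^{n,*}-Lu_{i,j}^{n,*}\right|+\left|Lu_{i,j}^{n,*}-Lu(x_i,y_j,t_n^*)\right|$, bounds the first term by $C(h_1^2+h_2^2)$ via the standard second-order spatial truncation error, and bounds the second by repeating the argument of Lemma~\ref{lemma2.4} for $\Delta u$ using the $\left|\partial_t^k\Delta u\right|\le C(1+t^{\alpha-k})$ part of Assumption~\ref{assumption1.1}. Your write-up simply makes explicit the details the paper leaves implicit.
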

\begin{proof}
The truncation error can be decomposed into two components, hence we have
\begin{align*}
	\left|(r_2)_{i,j}^n\right|&= \left|L_hu_{i,j}^{n,*}-Lu_{i,j}^{n,*}+Lu_{i,j}^{n,*}-Lu(x_i,y_j,t_n^*)\right|\\
	&\le
	\left|L_hu_{i,j}^{n,*}-Lu_{i,j}^{n,*}\right|+	\left|Lu_{i,j}^{n,*}-Lu(x_i,y_j,t_n^*)\right|\\
	&\le C(\tau_n^2t_n^{\alpha-2}+h_1^2+h_2^2).
\end{align*}
	The proof follows a similar procedure as that of Lemma \ref{lemma2.4}. Additionally, the assumption $\left|\frac{{{\partial }^{k}}\Delta u(\bm{x},t)}{\partial {{t}^{k}}}\right|\le C(1+{{t}^{{{\alpha }}-k}})$, for $k=0,1,2,3,$ is required.
\end{proof}
\section{Stability result for Alikhanov scheme}
\begin{lemma}
	Set $1\ge \sigma\ge\frac{1}{2}$. Suppose that property P1 holds, and    $(\delta_t^{\alpha,*}-\lambda_1)w^j-\lambda_2w^{j-1}\ge 0$, $w^0\ge 0$ for all $j\ge1$, where $\lambda_1,\lambda_2\ge 0$. Let the temporal mesh satisfy $\lambda_1\tau^\alpha\le \frac{1}{2\Gamma(2-\alpha)}$. We have $w^j\ge0 $ for $ j\ge0$.
\end{lemma}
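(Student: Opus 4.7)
The plan is to prove this by induction on $j$, treating it as a discrete comparison principle in the spirit of standard nonnegativity arguments for M-matrix-like discretizations of the Caputo derivative. The hypothesis on the temporal mesh is precisely what is needed to guarantee that the diagonal coefficient $g_{m-1,m-1}$ dominates the damping term $\lambda_1$, while property P1 ensures the off-diagonal coefficients have the correct sign to propagate nonnegativity from earlier time levels.

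First, I would invoke Lemma \ref{lemma2.1} to write
\begin{equation*}
\delta_t^{\alpha,*}w^m = \sum_{j=1}^{m}g_{m-1,j-1}(w^j-w^{j-1}) = g_{m-1,m-1}w^m + \sum_{k=1}^{m-1}(g_{m-1,k-1}-g_{m-1,k})w^k - g_{m-1,0}w^0,
\end{equation*}
via summation by parts. Substituting this into the hypothesis $(\delta_t^{\alpha,*}-\lambda_1)w^m - \lambda_2 w^{m-1}\ge 0$ and moving the historical terms to the right, one obtains
\begin{equation*}
(g_{m-1,m-1}-\lambda_1)\,w^m \;\ge\; g_{m-1,0}\,w^0 + \sum_{k=1}^{m-1}(g_{m-1,k}-g_{m-1,k-1})\,w^k + \lambda_2\,w^{m-1}.
\end{equation*}
Now I would argue inductively: the base case $w^0\ge 0$ is given; assuming $w^k\ge 0$ for $0\le k\le m-1$, property P1 makes each coefficient $g_{m-1,k}-g_{m-1,k-1}$ strictly positive and $g_{m-1,0}>0$, so together with $\lambda_2\ge 0$ the entire right-hand side is nonnegative.

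The main obstacle, and the step that uses the mesh hypothesis crucially, is to verify that the factor $g_{m-1,m-1}-\lambda_1$ on the left is strictly positive, so that one may divide and conclude $w^m\ge 0$. From the formula in Lemma \ref{lemma2.1}, $g_{m-1,m-1}=\tau_m^{-1}(a_{m-1,m-1}+b_{m-1,m-2})$ for $m\ge 2$ and $g_{0,0}=\tau_1^{-1}a_{0,0}$ for $m=1$. A short monotonicity argument applied to the integrand of $b_{m-1,m-2}$ (the kernel $(t_m^*-\eta)^{-\alpha}$ is increasing on $(t_{m-2},t_{m-1})$ while $\eta-t_{m-3/2}$ is odd about the midpoint) shows $b_{m-1,m-2}\ge 0$. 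Hence in either case
\begin{equation*}
g_{m-1,m-1}\;\ge\;\tau_m^{-1}a_{m-1,m-1}\;=\;\frac{\sigma^{1-\alpha}}{\Gamma(2-\alpha)}\,\tau_m^{-\alpha}\;\ge\;\frac{(1/2)^{1-\alpha}}{\Gamma(2-\alpha)}\,\tau^{-\alpha}\;\ge\;\frac{1}{2\Gamma(2-\alpha)}\,\tau^{-\alpha},
\end{equation*}
where I used $\sigma\ge 1/2$, $\alpha\in(0,1)$, and $\tau_m\le \tau$. The mesh assumption $\lambda_1\tau^\alpha\le \frac{1}{2\Gamma(2-\alpha)}$ then yields $g_{m-1,m-1}\ge \lambda_1$; the inequality is in fact strict because $(1/2)^{1-\alpha}>1/2$ for $\alpha\in(0,1)$. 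Dividing closes the induction and completes the proof.
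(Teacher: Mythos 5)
Your proposal is correct and follows essentially the same route as the paper's proof: rewrite $\delta_t^{\alpha,*}$ via summation by parts so that P1 gives the M-matrix sign pattern (positive diagonal coefficient, nonpositive off-diagonal coefficients), run induction on the time level, and close the induction by bounding the diagonal coefficient below by $\frac{\sigma^{1-\alpha}}{\Gamma(2-\alpha)}\tau^{-\alpha}>\frac{1}{2\Gamma(2-\alpha)}\tau^{-\alpha}\ge\lambda_1$ using $\sigma^{1-\alpha}>1/2$. You merely make explicit two steps the paper leaves implicit (the summation-by-parts identity and the nonnegativity of $b_{m-1,m-2}$), which is fine.
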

\begin{proof}
	$w^0\ge0$ is obvious. Using P1, we know that $\delta_t^{\alpha,*}w^n$ can be expressed as $\delta_t^{\alpha,*}w^n=\sum_{j=0}^{n}{p_{n,j}w^j}$ for $1\le n\le N$, where $p_{n,n}>0$ and $p_{n,j}<0$ when $j<n$. If $w^j\ge 0$ holds for $j=0,1,...,k-1$, we have $(p_{k,k}-\lambda_1) w^k\ge \sum_{j=0}^{k-1}{\left|p_{k,j}\right|w^j}+\lambda_2w^{k-1}\ge 0$. This together with $p_{k,k}-\lambda_1> 0$, gives $w^k\ge0$. One can infer that $p_{k,k}\ge \tau_k^{-\alpha}\frac{\sigma^{1-\alpha}}{\Gamma(2-\alpha)}\ge \tau^{-\alpha}\frac{\sigma^{1-\alpha}}{\Gamma(2-\alpha)}$. Owing to $\sigma^{1-\alpha}> \frac{1}{2}$, one has $p_{k,k}-\lambda_1>0$. The induction is thus completed.
\end{proof}

\begin{corollary}
	\label{corollary3.1}
	Let the temporal mesh satisfy $1\ge \sigma\ge\frac{1}{2}$, $\lambda_1\tau^\alpha\le \frac{1}{2\Gamma(2-\alpha)}$ and property P1 hold. If $A^0\le B^0$ and $(\delta_t^{\alpha,*}-\lambda_1)A^j-\lambda_2A^{j-1}\le (\delta_t^{\alpha,*}-\lambda_1)B^j-\lambda_2B^{j-1}$ for all $j\ge1$, where $\lambda_1,\lambda_2\ge 0$, we have $A^j\le B^j$, for $j\ge0$.
\end{corollary}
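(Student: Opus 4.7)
The plan is to reduce Corollary \ref{corollary3.1} directly to the preceding lemma by the standard linearization trick. I would set $w^j := B^j - A^j$ for $j\ge 0$ and verify that $w$ satisfies the hypotheses of that lemma, after which the conclusion is immediate.

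First, the initial condition $A^0\le B^0$ gives $w^0\ge 0$. Second, since the operator $\delta_t^{\alpha,*}$ is a finite linear combination of grid values (cf.\ Lemma \ref{lemma2.1}), it is linear, so
\[
(\delta_t^{\alpha,*}-\lambda_1) w^j - \lambda_2 w^{j-1}
= \bigl[(\delta_t^{\alpha,*}-\lambda_1) B^j - \lambda_2 B^{j-1}\bigr]
- \bigl[(\delta_t^{\alpha,*}-\lambda_1) A^j - \lambda_2 A^{j-1}\bigr]\ge 0,
\]
where the last inequality is exactly the assumption of the corollary. The parameter conditions $1\ge\sigma\ge\tfrac12$, $\lambda_1\tau^\alpha\le \frac{1}{2\Gamma(2-\alpha)}$, and property P1 are inherited unchanged, and $\lambda_1,\lambda_2\ge 0$ as required. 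Applying the preceding lemma to $w^j$ then yields $w^j\ge 0$ for every $j\ge 0$, which is precisely $A^j\le B^j$.

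I do not anticipate any genuine obstacle here; the only point worth emphasizing is that the argument depends on the linearity of $\delta_t^{\alpha,*}$ and on the fact that $\lambda_1$ and $\lambda_2$ appear symmetrically on both sides of the hypothesized inequality, so that subtraction preserves the sign and the structural conditions on the mesh and parameters carry over verbatim to $w^j$.
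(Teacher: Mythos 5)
Your reduction $w^j:=B^j-A^j$ is correct and is exactly the intended argument: the paper states Corollary \ref{corollary3.1} without proof precisely because it follows immediately from the preceding positivity lemma by linearity of $\delta_t^{\alpha,*}$. Nothing is missing.
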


\begin{lemma}
	\label{lemma3.1}
Set $1\ge \sigma\ge\frac{1}{2}$. Let $\lambda_1\lambda_2\ge 0$ and $\lambda_1+\lambda_2> 0$. Select an arbitrary positive constant $c_0$ such that $c_0<\frac{1}{2}\left[2(\lambda_1+\lambda_2)\Gamma(2-\alpha)\right]^{-1/\alpha}$ and an arbitrary \(\tau = \max_{1\le j\le N} \tau_j\)  such that $\tau\le c_0/2$. Then it follows that,
for any $\hat t=t_m\in\{t_j\}_{j=0}^N$, there exists $\{B^{j}\}_{j=0}^N$, satisfying $B^j=0\ when\ j\le m$, $0\le B^j\lesssim 1$, and \begin{align}\label{eq3.1}
	(\delta_t^{\alpha,*}-\lambda_1)B^j-\lambda_2B^{j-1}\gtrsim
	\begin{cases}
		0, \quad t_j^*< t_m+c_0,\\
		1, \quad t_j^*\ge t_m+c_0,
	\end{cases}
\end{align}
when $j\ge 1$.
\end{lemma}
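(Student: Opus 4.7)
The plan is to prove existence by explicitly constructing a Mittag-Leffler-type barrier. With $\mu:=\lambda_1+\lambda_2>0$, I would set $B^j:=C_\star\,\phi(t_j-t_m)$, where $\phi(s):=\mu^{-1}[E_\alpha(\mu s^\alpha)-1]$ for $s>0$ and $\phi(s):=0$ for $s\le 0$, with a constant $C_\star$ of order one fixed at the end. The motivation is the continuous identity $D_t^\alpha\phi(t-t_m)=1+\mu\phi(t-t_m)$, which produces the exact cancellation
\[
D_t^\alpha B(t)-\lambda_1B(t)-\lambda_2B(t)\equiv C_\star\quad\text{on }(t_m,T].
\]
Nonnegativity follows from $E_\alpha\ge 1$, and the uniform bound $B^j\le C_\star\mu^{-1}[E_\alpha(\mu T^\alpha)-1]\lesssim 1$ from monotonicity on a fixed interval; the conditions $B^j=0$ for $j\le m$ and $B^j\ge 0$ for all $j$ are then automatic.

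The discrete verification splits at $t_j^*=t_m+c_0$. In the outer regime $t_j^*\ge t_m+c_0$, the base point is at distance at least $c_0\ge 2\tau$ from the singularity, so $\phi$ is smooth near $t_j^*$ with derivative bounds matching Assumption \ref{assumption1.1}; a consistency estimate in the spirit of Lemma \ref{lemma2.3} gives $\delta_t^{\alpha,*}B^j=D_t^\alpha B(t_j^*)+O(\tau^{\hat\gamma+1})$ and $B^{j-1}-B(t_j^*)=O(\tau^\alpha)$, so the continuous cancellation turns the discrete inequality into $C_\star+O(\tau^\alpha)\gtrsim 1$ once $C_\star$ is large enough. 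In the inner regime $t_j^*<t_m+c_0$, consistency degrades because $\phi'(s)\sim s^{\alpha-1}$ blows up at $s=0$; instead, I would invoke property P3 of Lemma \ref{lemma2.2} together with $\phi'\ge 0$ to obtain
\[
\delta_t^{\alpha,*}B^j\ge \pi_A^{-1}C_\star\sum_{k=m+1}^{j}\int_{t_{k-1}}^{t_k}\frac{(t_j-s)^{-\alpha}}{\Gamma(1-\alpha)}\,\frac{\phi(t_k-t_m)-\phi(t_{k-1}-t_m)}{\tau_k}\,ds.
\]
An Abel summation plus the leading-order estimate $\phi(s)\sim s^\alpha/\Gamma(\alpha+1)$ converts this into a quantity comparable with $\pi_A^{-1}C_\star(1+\mu\phi(t_j-t_m))$, while the linear sink $\lambda_1B^j+\lambda_2B^{j-1}\le\mu C_\star\phi(2c_0)\lesssim C_\star(2c_0)^\alpha\mu/\Gamma(\alpha+1)$ is controlled by the hypothesis $(2c_0)^\alpha\mu\Gamma(2-\alpha)<1/2$ together with the elementary inequality $\Gamma(\alpha+1)\Gamma(2-\alpha)\ge\pi/4>1/2$ on $\alpha\in(0,1)$, leaving the required ``$\gtrsim 0$''.

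The main obstacle will be the quantitative matching of these two regimes at the transition $t_j^*\simeq t_m+c_0$, because the $\pi_A^{-1}$ loss in P3 and the consistency factor $(t_j-t_m)^{\alpha-2}$ from Alikhanov truncation both eat into the gain, forcing $C_\star$ to dominate $\pi_A\mu\phi(2c_0)$ in the inner regime while remaining bounded in the outer regime. The timestep hypothesis $\tau\le c_0/2$ plays a dual role: in the outer regime it keeps $t_j-t_m\ge 2\tau$ so the consistency remainder is absorbed into a small fraction of $C_\star$, and in the inner regime it ensures at least one subinterval $[t_{k-1},t_k]$ lies strictly inside $(t_m,t_m+c_0)$ so P3 is invoked nontrivially. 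The algebraic cancellation $\mu\phi-(\lambda_1+\lambda_2)\phi=0$ built into the Mittag-Leffler profile is what makes the construction uniform in $T$, which a simpler power-type barrier $(t-t_m)^\alpha$ fails to achieve because of the residual $-(\lambda_1+\lambda_2)(t-t_m)^\alpha$ that grows with $t$.
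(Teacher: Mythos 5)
Your construction is genuinely different from the paper's (the paper builds $B$ as a geometric sum $\sum_k\bar c^{\,k}\max\{0,t-q_k\}$ of shifted ramps and exploits the exact one-sided bound $\delta_t^{\alpha,*}B_k^j\ge D_t^{\alpha}B_k(t_j^*)$, which holds with no consistency error and no constant loss because the quadratic interpolant of a ramp is nonpositive on the kink interval), but as written it has two gaps that I do not think can be closed with the stated hypotheses.

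First, the inner regime does not close numerically. Property P3 costs you a factor $\pi_A^{-1}=4/11$, so after the Abel summation the best you can hope for is $\delta_t^{\alpha,*}B^j\ge\pi_A^{-1}C_\star\bigl(1+\mu\,\phi(t_j-t_m)\bigr)$, and the required inequality $\pi_A^{-1}\bigl(1+\mu\phi\bigr)\ge\mu\phi$ forces $\mu\,\phi(t_j-t_m)\le(\pi_A-1)^{-1}=4/7$. The hypothesis on $c_0$ only yields $\mu(2c_0)^{\alpha}<\tfrac{1}{2\Gamma(2-\alpha)}$, and for the full Mittag--Leffler series this allows $\mu\,\phi=E_\alpha(\mu s^\alpha)-1$ to exceed $4/7$ (e.g.\ for $\alpha=1/2$ one gets values near $0.9$); your leading-order bound via $\Gamma(\alpha+1)\Gamma(2-\alpha)\ge\pi/4$ discards exactly the higher-order terms that break the inequality. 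The exact cancellation $1+\mu\phi-\mu\phi=1$ that motivates the Mittag--Leffler profile is destroyed by the $\pi_A^{-1}$ loss, and there is no free parameter left to absorb it ($C_\star$ multiplies both sides). Second, the outer-regime consistency step is unsupported: Lemma \ref{lemma2.3} applies to functions with a singularity at $t=0$ on a mesh graded at $0$, whereas $\phi(\cdot-t_m)$ has $\phi'(s)\sim s^{\alpha-1}$ at the interior node $t_m$, where the mesh is locally quasi-uniform; the truncation error there is only $O\bigl((\tau/c_0)^{\alpha}\bigr)$ times an unspecified constant, again multiplied by $C_\star$. Since the lemma must hold for every $\tau\le c_0/2$ (not merely for $\tau$ sufficiently small), "choose $C_\star$ large" cannot absorb a consistency defect proportional to $C_\star$; you would need the defect bounded by an absolute constant strictly less than $1$, which is not established. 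A related omission: for $j$ near $m+1$ the Alikhanov history term picks up a contribution from $[t_{m-1},t_m]$ through the quadratic interpolant $(t_{m-1},0),(t_m,0),(t_{m+1},B^{m+1})$ even though $B\equiv0$ there; the paper handles this sign explicitly, and your outer-regime argument would need to as well.
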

\begin{proof}
The proof process is inspired by Lemma 3.2 in \cite{MR4129005}.	
	Set $B(t)=\sum_{k=0}^{K}{\bar{c}^kB_k(t)}$, where $B_k(t)=\max\{0,t-q_k\}$, $q_0=\hat t$,
	\begin{align*}
	 q_k=\begin{cases}
		\max\{t_j|t_j\in [	q_{k-1}+c_0-\tau,q_{k-1}+c_0]\},\quad &if\  T\ge q_{k-1}+c_0-\tau,\\
		\forall q \in [	q_{k-1}+c_0-\tau,q_{k-1}+c_0], &otherwise.
	\end{cases}
\end{align*}
	 For $T> \hat t+c_0$, the selection of $K$ must ensure that $T\in(q_{K+1},q_{K+2}]$ which implies that $K\le \lceil {2(T-\hat t)/c_0} \rceil-1\le \lceil {2T/c_0} \rceil-1$. Specially, if $T\le \hat t+c_0$, set $K=0$.
  Denote $\lambda^*=\lambda_1+\lambda_2$. Then it is clear that $D_t^{\alpha}B_k(t_j^{*})-\lambda^* B_k(t_j)=0$ when $q_k\ge t_j^{*}$. Subsequently, we shall consider the scenario where $q_k<t_j^{*}$, i.e. $q_k\le t_{j-1}$. Using the definition of $D_t^{\alpha}$, the result is as follows:
  \begin{align*}
  	D_t^{\alpha}B_k(t_j^{*})-\lambda^* B_k(t_j)&=\frac{1}{\Gamma(1-\alpha)}\int_{q_k}^{t_j^*}{(t_j^*-s)^{-\alpha}\partial_s(s-q_k)ds}-\lambda^*(t_j-q_k)\\
  	&=(t_j^{*}-q_k)^{1-\alpha}\left[\frac{1}{\Gamma(2-\alpha)}-\lambda^*\frac{t_j-q_k}{t_j^{*}-q_k}(t_j^{*}-q_k)^\alpha\right]\\
  	&\ge(t_j^{*}-q_k)^{1-\alpha}\left[\frac{1}{\Gamma(2-\alpha)}-2\lambda^*(t_j^{*}-q_k)^\alpha\right].
  \end{align*}
  The last inequality employs
  \begin{align*}
  	\frac{t_j-q_k}{t_j^{*}-q_k}
  	&=\left(1+\frac{t_j^*-t_j}{t_j-q_k}\right)^{-1}\\
  	&=\left[1-\frac{(1-\sigma)\tau_j}{t_j-q_k}\right]^{-1}\\
  	&\le\left[1-\frac{\tau_j}{2(t_j-q_k)}\right]^{-1}\\
  	&\le  2.
  \end{align*}
 If $t_j^*-q_k>0$, and $t_j^*-q_k\le 2c_0$, the expression on the right-hand side of the inequality is strictly greater than zero. Therefore,
  \begin{align*}
  	D_t^{\alpha}B_k(t_j^{*})-\lambda^* B_k(t_j)
  	&\ge \begin{cases}
  		0, \quad t_j^*\in (0,q_{k+1}),\\
  		\min\{(\frac{c_0}{2})^{1-\alpha}/\Gamma(2-\alpha)-\lambda^* c_0,(2c_0)^{1-\alpha}/\Gamma(2-\alpha)-4\lambda^* c_0\}, \quad t_j^*\in [q_{k+1},q_{k+2}],\\
  		-\left|T^{1-\alpha}/\Gamma(2-\alpha)-2\lambda^* T\right|, \quad t_j^*\in (q_{k+2},T].
  	\end{cases}\\
  	&:=
  	\begin{cases}
  		0, \quad t_j^*\in (0,q_{k+1}),\\
  		S_1, \quad t_j^*\in [q_{k+1},q_{k+2}],\\
  		-S_2, \quad t_j^*\in (q_{k+2},T].
  	\end{cases}
  \end{align*}
  Let $\bar{c}=\frac{S_1+S_2}{S_1}$, and we have
  \begin{align*}
  	\bar{c}^lS_1-\sum_{j=0}^{l-1}{\bar{c}^jS_2}=\bar{c}^lS_1-\frac{1-\bar{c}^l}{1-\bar{c}}S_2=S_1,\ \forall l\ge 1.
  \end{align*}
 Thus,
  \begin{equation}
  	\begin{split}
  		D_t^{\alpha}B(t_j^*)-\lambda^* B(t_j)&=\sum_{k=0}^{K}{\bar{c}^k\left(D_t^{\alpha}B_k(t_j^*)-\lambda^* B_k(t_j)\right)}\\
  		&\ge
  		\begin{cases}
  			0, \quad t_j^*\in (0,q_{1}),\\
  			S_1, \quad t_j^*\in [q_{1},T].\\
  		\end{cases}
  		\label{result1}
  	\end{split}
  \end{equation}

If $t_j\le q_n$, it is obvious that $\delta_t^{\alpha,*}B_n^j=D_t^\alpha B_n(t_j^*)=0$.	Suppose that there exists $t_k\in\{t_j\}_{j=1}^{N}$ such that $q_n=t_k\neq0$ and $t_j\ge t_{k+1}$, we have
	\begin{align*}
		\delta_t^{\alpha,*}B_n^j&=\frac{1}{\Gamma(1-\alpha)}\int_{t_{k-1}}^{t_k}{(t_j^*-s)^{-\alpha}(\Pi_{2,k} B_n(s))'ds}+\frac{1}{\Gamma(1-\alpha)}\int_{t_{k}}^{t_j^*}{(t_j^*-s)^{-\alpha}ds}\\
		&=\frac{-1}{\Gamma(1-\alpha)}\int_{t_{k-1}}^{t_k}{\alpha(t_j^*-s)^{-\alpha-1}(\Pi_{2,k} B_n(s))ds}+\frac{1}{\Gamma(1-\alpha)}\int_{t_{k}}^{t_j^*}{(t_j^*-s)^{-\alpha}ds}\\
		&\ge \frac{1}{\Gamma(1-\alpha)}\int_{t_{k}}^{t_j^*}{(t_j^*-s)^{-\alpha}ds}=D_t^\alpha B_n(t_j^*).
	\end{align*}
	 Specially, $\delta_t^{\alpha,*}B_n^j= D_t^\alpha B_n(t_j^*)$ for any $j\ge 1$, if $q_n=t_k=0$. In conclusion, $\delta_t^{\alpha,*}B_n^j\ge D_t^\alpha B_n(t_j^*)$ holds true for $j\ge 1$ and $0\le n\le K$.
	
	Owing to $B_k(t_j)\ge B_k(t_{j-1})$, for $1\le j\le N$, it holds that
	\begin{align*}
		\delta_t^{\alpha,*} B^j-\lambda_1B(t_{j})-\lambda_2B(t_{j-1})
		&=\sum_{k=0}^{K}{\bar{c}^k\left(\delta_t^{\alpha,*} B_k^j-\lambda_1B_k(t_{j})-\lambda_2B_k(t_{j-1})\right)}\\
		&\ge \sum_{k=0}^{K}{\bar{c}^k\left(\delta_t^{\alpha,*} B_k^j-\lambda^* B_k(t_{j})\right)}\\
		&\ge
		\sum_{k=0}^{K}{\bar{c}^k\left(D_t^\alpha B_k(t_j^*)-\lambda^* B_k(t_{j})\right)}.
	\end{align*}
It is obvious that $t_j^*\ge q_1$ when $t_j^*\ge t_m+c_0$.	By combining \eqref{result1}, the inequality \eqref{eq3.1}  holds.
\end{proof}

\begin{theorem}
	\label{lemma3.3}
	Assume that $\sigma=1-\frac{\alpha}{2}$, $\tau$ is sufficiently small and property P1 holds. If $\lambda_1\ge 0$ and $\lambda_2\ge 0$, we can derive the following important inequality: \\
	(i) For $1\le r \le (3-\alpha)/\alpha$, $\gamma\in R$ and $\gamma \neq 0$, it holds that
	\begin{align}
		\label{eq3.2}
		\begin{cases}
			(\delta_t^{\alpha,*}-\lambda_1)  v^j-\lambda_2 v^{j-1}\lesssim (\tau_1/t_j)^{\gamma+1}\\
			\forall\ j\ge 1, v^0=0
		\end{cases}\Rightarrow  v^j\lesssim V(j,\gamma):= \tau_1 t_j^{\alpha-1}(\tau_1/t_j)^{\min\{0,\gamma\}}.
	\end{align}
(ii) If $r\ge 1$ and $\gamma\le \alpha-1$, the above result holds as well.
\end{theorem}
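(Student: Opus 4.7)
The plan is to invoke the discrete maximum principle (Corollary~\ref{corollary3.1}) with an explicit majorant $\Phi^{j}$ built by weighted superposition of the ramp-barriers supplied by Lemma~\ref{lemma3.1}. For the target index $m$ I would fix a geometric chain of reference times $\hat t_{\ell}\simeq 2^{\ell}\tau_{1}$ for $\ell=0,1,\dots,L$ with $\hat t_{L}\simeq t_{m}$, so that $L\simeq \log_{2}(t_{m}/\tau_{1})$. Letting $B^{j}_{(\ell)}$ denote the barrier produced by Lemma~\ref{lemma3.1} at $\hat t_{\ell}$, set
\[
\Phi^{j}=C_{0}\sum_{\ell=0}^{L}\omega_{\ell}\,B^{j}_{(\ell)}, \qquad \omega_{\ell}=(\tau_{1}/\hat t_{\ell})^{\gamma+1},
\]
with $C_{0}$ chosen large enough to absorb the implicit constant in the hypothesis. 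Since each $B^{j}_{(\ell)}$ vanishes for $t_{j}\le \hat t_{\ell}$, we have $\Phi^{0}=0$, matching the initial condition on $v^{j}$.

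The key verification is then that $(\delta_{t}^{\alpha,*}-\lambda_{1})\Phi^{j}-\lambda_{2}\Phi^{j-1}\gtrsim (\tau_{1}/t_{j})^{\gamma+1}$ for all $j\ge 1$. By Lemma~\ref{lemma3.1} the $\ell$-th summand contributes at least $\omega_{\ell}$ to the left-hand side once $t_{j}^{*}\ge \hat t_{\ell}+c_{0}$; the activated weights form a geometric (in $\ell$) sum bounded below by its dominant term, which for $\gamma+1>0$ corresponds to the largest $\hat t_{\ell}\le t_{j}-c_{0}$ and for $\gamma+1\le 0$ corresponds to $\hat t_{L}$. Both cases yield the desired $(\tau_{1}/t_{j})^{\gamma+1}$ lower bound after a routine geometric-sum calculation. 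With this in hand, Corollary~\ref{corollary3.1} applies under the smallness condition $\lambda_{1}\tau^{\alpha}\le 1/(2\Gamma(2-\alpha))$, which is ensured by taking $\tau$ sufficiently small, and delivers $v^{j}\le \Phi^{j}$.

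The remaining step is to prove $\Phi^{m}\lesssim V(m,\gamma)$. The crude bound $B^{m}_{(\ell)}\lesssim 1$ merely gives $\Phi^{m}\lesssim \sum_{\ell}\omega_{\ell}$, which overshoots. Instead one must estimate $B^{m}_{(\ell)}$ sharply by returning to its construction in Lemma~\ref{lemma3.1} as a finite sum of ramps $\max\{0,t-q_{k}\}$; this yields essentially $B^{m}_{(\ell)}\simeq (t_{m}-\hat t_{\ell})^{\alpha}$ in the continuous analog, after which
\[
\Phi^{m}\lesssim \sum_{\ell}(\tau_{1}/\hat t_{\ell})^{\gamma+1}(t_{m}-\hat t_{\ell})^{\alpha}\simeq \tau_{1}\,t_{m}^{\alpha-1}(\tau_{1}/t_{m})^{\min\{0,\gamma\}}=V(m,\gamma),
\]
by a geometric-sum computation whose dominant term depends on the sign of $\gamma$. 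At this point the two regimes bifurcate: part~(i)'s restriction $r\le (3-\alpha)/\alpha$ enforces $\tau_{1}^{\alpha}\lesssim t_{m}/m$, which is exactly what is needed to preserve the continuous-discrete correspondence for $B^{m}_{(\ell)}$ near the initial layer where $\omega_{\ell}$ is largest; part~(ii)'s restriction $\gamma\le \alpha-1$ makes the summand non-decreasing in $\ell$, so the sum is dominated by its last term $\hat t_{L}\simeq t_{m}$ and no constraint on $r$ is needed.

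The main obstacle is this last stage: obtaining the sharp per-barrier estimate for $B^{m}_{(\ell)}$ and performing the geometric-sum bookkeeping that reproduces $V(m,\gamma)$ uniformly across the three sub-regimes $\gamma>0$, $\alpha-1\le \gamma<0$, and $\gamma<\alpha-1$. The earlier superposition and comparison steps are comparatively routine once Lemma~\ref{lemma3.1} and Corollary~\ref{corollary3.1} are in place.
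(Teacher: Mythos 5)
There is a genuine gap, and it sits exactly where you locate the ``main obstacle,'' but it is worse than a bookkeeping issue: the multiscale superposition of ramp barriers cannot work in the initial layer. The barrier $B^j$ supplied by Lemma~\ref{lemma3.1} only produces a \emph{positive} contribution $(\delta_t^{\alpha,*}-\lambda_1)B^j-\lambda_2 B^{j-1}\gtrsim 1$ once $t_j^*\ge \hat t+c_0$, where $c_0$ is a \emph{fixed $O(1)$ constant} determined by $\lambda_1+\lambda_2$ and $\alpha$ (it must satisfy $c_0<\tfrac12[2(\lambda_1+\lambda_2)\Gamma(2-\alpha)]^{-1/\alpha}$); for $t_j^*<\hat t+c_0$ the lemma only guarantees $\ge 0$. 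Consequently, for every $j$ with $t_j^*<c_0$ your entire sum $\Phi^j=C_0\sum_\ell\omega_\ell B^j_{(\ell)}$ satisfies merely $(\delta_t^{\alpha,*}-\lambda_1)\Phi^j-\lambda_2\Phi^{j-1}\ge 0$, while the right-hand side $(\tau_1/t_j)^{\gamma+1}$ you must dominate is strictly positive there (it equals $1$ at $j=1$ and is large throughout the layer when $\gamma+1<0$). So the hypothesis of Corollary~\ref{corollary3.1} fails on an $O(1)$ initial time interval and the comparison cannot be invoked. Choosing $\hat t_\ell\simeq 2^\ell\tau_1$ does not help: all barriers with $2^\ell\tau_1\ll c_0$ activate at essentially the same time $t\approx c_0$, so the superposition cannot resolve any scale finer than $c_0$ and cannot reproduce the singular profile near $t=0$. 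Relatedly, your ``sharp'' estimate $B^m_{(\ell)}\simeq (t_m-\hat t_\ell)^\alpha$ is not what the construction in Lemma~\ref{lemma3.1} gives: $B$ is a finite sum of linear ramps $\bar c^k\max\{0,t-q_k\}$ with geometrically growing coefficients, bounded by a constant $C_3$ depending on $T$ and $c_0$; it is not comparable to a fractional power of the elapsed time at small scales.

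The paper's proof avoids this by being a perturbation argument around the known $\lambda_1+\lambda_2=0$ case: it takes the power-type barriers $\xi_\gamma$ and $\xi_{\gamma^*}$ (with $\gamma^*=\min\{\gamma,0\}-\alpha$) from Theorem~4.2 of the cited reference, which already satisfy $\delta_t^{\alpha,*}\xi^j=(\tau_1/t_j)^{\cdot+1}$ with the correct singular bounds down to $t_1$, and observes that the extra terms $-\lambda_1\xi_\gamma^j-\lambda_2\xi_\gamma^{j-1}$ are of size $\tau_1^\alpha(\tau_1/t_j)^{1+\gamma^*}$, hence absorbable by $\bar b\tau_1^\alpha\xi_{\gamma^*}^j$ for $t_j<c_1$. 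Only a \emph{single} ramp barrier from Lemma~\ref{lemma3.1}, anchored at an $O(1)$ time $t_m\simeq c_0$, is then needed to absorb the $O(\tau_1^{1+\min\{\gamma,0\}})$ residual on $t_j\ge c_1$ --- precisely the regime where the ramp barrier's $c_0$-offset is harmless. This also explains the hypotheses you tried to rationalize: the restrictions $r\le(3-\alpha)/\alpha$, $\gamma\neq 0$ in (i) and $\gamma\le\alpha-1$ in (ii) are the conditions under which both $\xi_\gamma$ and $\xi_{\gamma^*}$ exist, not a continuous--discrete correspondence condition on $\tau_1^\alpha$. To repair your argument you would need the power-type barriers in the initial layer, at which point you have essentially reconstructed the paper's proof.
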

\begin{proof}
	(i)The Theorem 4.2 of reference \cite{MR4085134} has already reached the conclusion when $\lambda_1+\lambda_2=0$ that
	\begin{align*}
			\begin{cases}
			\delta_t^{\alpha,*} \xi_\gamma^j= (\tau_1/t_j)^{\gamma+1}\\
			\forall\ j\ge 1, \xi_\gamma^0=0
		\end{cases}\Rightarrow \left| \xi_\gamma^j\right|\le C_1 \tau_1t_j^{\alpha-1}(\tau_1/t_j)^{\min\{\gamma,0\}}=C_1\tau_1^\alpha(\tau_1/t_j)^{1+\min\{\gamma,0\}-\alpha}\ for\ j\ge 1.
	\end{align*}
	 Next, we prove that the result \eqref{eq3.2} can be obtained when $\lambda_1+\lambda_2\neq 0$. Define $\gamma^*=\min\{\gamma,0\}-\alpha<0$. We can obtain two functions $\xi_\gamma$, $\xi_{\gamma^*}$ that satisfy the following inequalities:
	 \begin{gather*}
	 	0\le \xi_\gamma^j\le C_1 \tau_1^\alpha(\tau_1/t_j)^{1+\min\{\gamma,0\}-\alpha},\\
	 	0\le \xi_{\gamma^*}^j\le C_2 \tau_1^\alpha(\tau_1/t_j)^{1+\gamma^*-\alpha}.
	 \end{gather*}
	 Using $t_j\simeq t_{j-1}$ when $j\ge 2$ and $\xi_{\gamma^*}^0,\xi_\gamma^0=0$, we have
	 \begin{gather*}
	 	0\le \xi_\gamma^{j-1}\lesssim \tau_1^\alpha(\tau_1/t_j)^{1+\min\{\gamma,0\}-\alpha},\\
	 	0\le \xi_{\gamma^*}^{j-1}\lesssim \tau_1^\alpha(\tau_1/t_j)^{1+\gamma^*-\alpha},
	 \end{gather*}
	 for $j\ge 1$. Therefore, there exists a positive constant $C_\gamma$, such that $	(\delta_t^{\alpha,*}-\lambda_1)\xi_\gamma^j-\lambda_2\xi_\gamma^{j-1}\ge (\tau_1/t_j)^{\gamma+1}-C_\gamma\tau_1^\alpha(\tau_1/t_j)^{1+\gamma^*},$ and $(\delta_t^{\alpha,*}-\lambda_1)\xi_{\gamma^*}^j-\lambda_2\xi_{\gamma^*}^{j-1}\ge (\tau_1/t_j)^{{\gamma^*}+1}(1-C_\gamma t_j^\alpha),$ for $j\ge 1$.	
	
	  Select a sufficiently small constant $c_1\le (\frac{1}{2C_\gamma})^{\frac{1}{\alpha}}$, and a sufficiently large constant $\bar{b}\ge C_\gamma\max\{2,c_1^{(-1-\gamma^*+\alpha)}, T^{(-1-\gamma^*+\alpha)}\}$, such that for $j\ge 1$, one has
	\begin{align*}	
&\quad (\delta_t^{\alpha,*}-\lambda_1)(\xi_{\gamma}^j+\bar{b}\tau_1^\alpha\xi_{\gamma^*}^j)-\lambda_2(\xi_{\gamma}^{j-1}+\bar{b}\tau_1^\alpha\xi_{\gamma^*}^{j-1}) \\
&\ge(\tau_1/t_j)^{\gamma+1}-C_\gamma\tau_1^\alpha(\tau_1/t_j)^{1+\gamma^*}+\bar{b}\tau_1^\alpha(\tau_1/t_j)^{{\gamma^*}+1}(1-C_\gamma t_j^\alpha)\\
	 	&= (\tau_1/t_j)^{\gamma+1}+(\bar{b}-C_\gamma)\tau_1^\alpha(\tau_1/t_j)^{1+\gamma^*}-\bar{b}C_\gamma\tau_1^\alpha(\tau_1/t_j)^{1+\gamma^*}t_j^\alpha\\
	 	&\ge (\tau_1/t_j)^{\gamma+1}- (\bar{b})^2\begin{cases}
	 		0,\quad &t_j<c_1,\\
	 		\tau_1^{1+\min\{\gamma,0\}},\quad &t_j\ge c_1.
	 	\end{cases}
	 \end{align*}
Introduce the function \( B(t) \) from Lemma \ref{lemma3.1}. Let $c_0=\min\{\frac{c_1}{3},\frac{(2(\lambda_1+\lambda_2)\Gamma(2-\alpha))^{-1/\alpha}}{3}\}$, $\tau\le c_0/2$ and select $t_m\simeq 1$, such that $c_0\ge t_m\ge \frac{1}{2}c_0$ if $T\ge \frac{1}{2}c_0$, and $t_m=T$ otherwise. It holds that $t_m+\frac{5}{4}c_0\le c_1$. The choice of  $c_0$ implies that $\lambda_1\tau^\alpha\le \frac{1}{2\Gamma(2-\alpha)}$. We can obtain a function $B(t)$ satisfying that $B^j=0$ for $ t_j\le t_m$, and
\begin{gather*}
	 0\le B^j\le C_3, \\
		(\delta_t^{\alpha,*}-\lambda_1)B^j-\lambda_2B^{j-1}\ge
		\begin{cases}
			0, \quad t_j^*< t_m+c_0,\\
			1, \quad t_j^*\ge t_m+c_0,
		\end{cases}
\end{gather*}
for $j\ge 1$.

Furthermore, we set $W^j=\xi_\gamma^j+\bar{b}\tau_1^\alpha\xi_{\gamma^*}^j+(\bar{b})^2\tau_1^{1+\min\{\gamma,0\}}B^j$.
When $t_j\le t_m$, one has $B^j=0$, then
\begin{align*}
W^j= \xi_\gamma^j+\bar{b}\tau_1^\alpha\xi_{\gamma^*}^j\le (C_1+C_2\bar{b}T^\alpha)\tau_1^\alpha(\tau_1/t_j)^{1+\gamma^*}.
\end{align*}
When $t_m<t_j\le T$, one has $\frac{1}{2}c_0<t_j\le T$, then
\begin{align*}
	W^j&\le\max\{C_1+C_2\bar{b}T^\alpha,C_3\bar{b}^2\}\left[\tau_1^\alpha(\tau_1/t_j)^{1+\gamma^*}+\tau_1^\alpha(\tau_1/t_j)^{1+\gamma^*}t_j^{1+\gamma^*}\right]\\
	&\le \max\{C_1+C_2\bar{b}T^\alpha,C_3\bar{b}^2\}\left[1+\max\{(c_0/2)^{1+\gamma^*},T^{1+\gamma^*}\}\right]\tau_1^\alpha(\tau_1/t_j)^{1+\gamma^*}.
\end{align*}
	To sum up, $W^j\le C\tau_1^\alpha(\tau_1/t_j)^{1+\gamma^*}$ for $j\ge 1$, where $C$ is independent of mesh sizes. If $t_j\ge c_1$,  $t_j^*=t_j-(1-\sigma)\tau_j\ge t_m+\frac{5}{4}c_0-\frac{1}{2}\tau\ge t_m+c_0$, and then it holds that
	\begin{align*}
		(\delta_t^{\alpha,*}-\lambda_1)B^j-\lambda_2B^{j-1}\ge 1.
	\end{align*}
	Therefore, one has
	\begin{align*}
	(\delta_t^{\alpha,*}-\lambda_1)  W^j-\lambda_2 W^{j-1}\ge (\tau_1/t_j)^{\gamma+1},\ for\ j\ge 1.
	\end{align*}
 By Corollary \ref{corollary3.1}, the proof is completed.\\
	 (ii) If $\gamma\le \alpha-1$, we have $\gamma^*=\min\left\{\gamma,0\right\}-\alpha=\gamma-\alpha<\alpha-1$. Thus, $\xi_\gamma^j$ and $\xi_{\gamma*}^j$ can be constructed as well. The proof can follow the approach demonstrated in (i).
\end{proof}

\section{Convergence and stability analysis}
For any grid functions $\chi$ and $\eta$, we introduce the following notations:
\begin{gather*}
	(\chi,\eta)=h_1h_2\sum_{j=1}^{M_1-1}{\sum_{k=1}^{M_2-1}{\chi_{j,k}\eta_{j,k}}},\
	\|\chi\|=\sqrt{(\chi,\chi)}, \\
	\|\chi\|_\infty=\max\limits_{1\le j\le M_1-1,1\le k\le M_2-1}|\chi_{j,k}|,\ |\chi|_2=\frac{1}{\nu}\sqrt{(L_h\chi,L_h\chi)}.
\end{gather*}
\begin{lemma}
	\label{lemma4.5}
 Let properties P1 and P2 hold. Set  $1\ge\sigma\ge \frac{1}{2}$. For any grid function $\{v^j\}_{j=0}^N$, one has \begin{align}
		\label{eq4.4*}
		(\delta_{t}^{\alpha,*}v^n,v^{n,*})\ge \left[\sigma \|v^n\|+(1-\sigma)\|v^{n-1}\|\right](\delta_t^{\alpha,*}\|v^n\|),\ n=1,...,N.
	\end{align}
\end{lemma}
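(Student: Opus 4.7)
The plan is to use Lemma \ref{lemma2.1} to express $\delta_t^{\alpha,*} v^n$ as a linear combination of the $v^j$ and compare term-by-term with the analogous scalar expression applied to $V^j:=\|v^j\|$. By an Abel-summation rearrangement one obtains $\delta_t^{\alpha,*} v^n = g_{n-1,n-1} v^n - \sum_{j=0}^{n-1} H_j v^j$, with $H_0:=g_{n-1,0}$ and $H_j:=g_{n-1,j}-g_{n-1,j-1}$ for $1\le j\le n-1$, all strictly positive by property P1, and the same identity holds after replacing every $v^j$ by $V^j$. Writing $V^{n,*}:=\sigma V^n+(1-\sigma)V^{n-1}$ and $E_j:=(v^j, v^{n,*})-V^j V^{n,*}$, the statement reduces to
\begin{equation*}
D_n \;:=\; g_{n-1,n-1}\, E_n - \sum_{j=0}^{n-1} H_j\, E_j \;\ge\; 0.
\end{equation*}

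For the indices $0\le j\le n-2$, the triangle inequality yields $\|v^{n,*}\|\le V^{n,*}$ (this uses $0\le\sigma\le 1$), and Cauchy-Schwarz then gives $E_j\le 0$; combined with the negative coefficient $-H_j$, each such index contributes non-negatively to $D_n$. The delicate step is the top index $j=n$, whose coefficient $g_{n-1,n-1}$ is positive while $E_n\le 0$, so a naive term-by-term bound gives the wrong sign. The remedy is to pair $j=n$ with $j=n-1$: direct expansion of $v^{n,*}=\sigma v^n+(1-\sigma)v^{n-1}$ yields the key identities
\begin{equation*}
E_n = (1-\sigma)X, \qquad E_{n-1} = \sigma X, \qquad X := (v^n,v^{n-1})-V^n V^{n-1}\le 0,
\end{equation*}
so that the combined contribution of the two top indices equals $-[(2\sigma-1)g_{n-1,n-1}-\sigma g_{n-1,n-2}]\,X$, which is non-negative precisely by property P2 together with $X\le 0$. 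Summing with the non-negative contributions from $0\le j\le n-2$ gives $D_n\ge 0$ for $n\ge 2$.

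The edge case $n=1$ must be handled separately, since there is neither a middle range nor a genuine pairing available: a direct computation yields $D_1=g_{0,0}(1-2\sigma)[(v^1,v^0)-V^1 V^0]\ge 0$ from $\sigma\ge 1/2$ and Cauchy-Schwarz, without invoking P2. The main obstacle is thus the sign mismatch at the top index $j=n$: naive Cauchy-Schwarz fails exactly on the only positive coefficient, and recovering positivity forces the combination with $j=n-1$ and the use of P2. Everything else is bookkeeping on the Abel-rearranged coefficients, whose signs are controlled by P1.
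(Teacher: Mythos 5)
Your proof is correct and complete. Note that the paper itself gives no argument for this lemma: it simply defers to an external reference (``This is a special case of \cite{ChenSty2025}''), so there is no in-paper proof to compare against. Your self-contained derivation is sound: the Abel rearrangement $\delta_t^{\alpha,*}v^n=g_{n-1,n-1}v^n-\sum_{j=0}^{n-1}H_jv^j$ with telescoping $\sum_j H_j=g_{n-1,n-1}$ and $H_j>0$ by P1 is exactly the structure Lemma \ref{lemma***} also exploits; the reduction to $D_n\ge0$ with $E_j=(v^j,v^{n,*})-V^jV^{n,*}$ is valid because the scalar sequence $V^j=\|v^j\|$ satisfies the same Abel identity; the middle-range bound $E_j\le0$ via $\|v^{n,*}\|\le V^{n,*}$ and Cauchy--Schwarz is right; and the key identities $E_n=(1-\sigma)X$, $E_{n-1}=\sigma X$ with $X=(v^n,v^{n-1})-V^nV^{n-1}\le0$ check out by direct expansion, so the paired top contribution $-\bigl[(2\sigma-1)g_{n-1,n-1}-\sigma g_{n-1,n-2}\bigr]X$ is nonnegative precisely by P2. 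The separate $n=1$ computation, which needs only $\sigma\ge1/2$ and Cauchy--Schwarz, is also correct. This is the standard route for establishing the ``gradient'' structure of Alikhanov-type weights, and it is useful that you identified exactly where P2 (rather than P1 alone) is indispensable, namely in recovering the sign at the two top indices.
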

\begin{proof}
	This is a special case of \cite{ChenSty2025}.
\end{proof}
Define $\Pi_h^0=\{u|u\ is\ a\ grid\ function\ defined\ on\ \Omega'\ and\ u_{i,j}=0\ when\ (x_i,y_j)\in\partial\Omega_h\}$.
\begin{lemma}(\cite{MR4533497},Lemma 3.2)
	\label{lemma*}
	For arbitrary grid function $v\in \Pi_h^0$, one has
	\begin{align*}		
		\|v\|_{\infty} \le C\|v\|^{\frac{1}{2}}\left(\|v\|+|v|_2\right)^{\frac{1}{2}}.
	\end{align*}
\end{lemma}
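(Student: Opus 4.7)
My plan is to derive this estimate via two ingredients: (i) a discrete Agmon-type inequality $\|v\|_\infty^2 \lesssim \|v\|\,\|v\|_{H^2_h}$, where $\|v\|_{H^2_h}$ collects the discrete $L^2$-norms of $v$ together with its first- and mixed second-order differences; and (ii) a discrete elliptic-regularity bound $\|v\|_{H^2_h} \lesssim \|v\| + |v|_2$ that exploits the zero Dirichlet condition on the rectangle $\Omega$.

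For (i) I would start from a 1D discrete Sobolev inequality: for a grid function $w$ with $w_0 = w_M = 0$, writing $w_j^2 = \sum_{i=1}^{j}(w_i + w_{i-1})(w_i - w_{i-1})$ and applying the discrete Cauchy--Schwarz inequality yields $\max_j |w_j|^2 \le 2\|w\|\,\|\delta w\|$, with $\delta$ denoting the forward difference. Applying this first in the $x$-direction to $v(\cdot,k)$ and then in the $y$-direction to $v(j,\cdot)$ (both restrictions vanish at the endpoints because $v \in \Pi_h^0$) and multiplying the two bounds gives
\begin{equation*}
v_{j,k}^4 \le 4\,\|v(\cdot,k)\|_x\,\|\delta_x v(\cdot,k)\|_x\,\|v(j,\cdot)\|_y\,\|\delta_y v(j,\cdot)\|_y.
\end{equation*}
Taking the maximum over $(j,k)$ and then invoking the 1D Sobolev inequality a second time in the outer variable---each of the four slice norms above is itself a 1D grid function that vanishes at the opposite-direction boundary---produces $\|v\|_\infty^4 \lesssim \|v\|\,\|\delta_x v\|\,\|\delta_y v\|\,\|\delta_x \delta_y v\|$. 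The elementary interpolation $\|v\|_{H^1_h}^2 \le \|v\|\,\|v\|_{H^2_h}$ then delivers the Agmon-type bound $\|v\|_\infty^2 \lesssim \|v\|\,\|v\|_{H^2_h}$.

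For (ii) I would exploit the fact that on a rectangle with zero Dirichlet data summation by parts produces the identity $\|\delta_x^2 v\|^2 + 2\|\delta_x\delta_y v\|^2 + \|\delta_y^2 v\|^2 = \|\Delta_h v\|^2$, while $\|\delta_x v\|^2 + \|\delta_y v\|^2 \le \|v\|\,\|\Delta_h v\|$ follows from another summation by parts and Cauchy--Schwarz. Since $|v|_2 = \|\Delta_h v\|$, these combine to give $\|v\|_{H^2_h} \lesssim \|v\| + |v|_2$, and inserting this into the Agmon inequality produces the claim.

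The main technical obstacle is the slice-norm manipulation in (i): one must verify that the 1D function $k \mapsto \|v(\cdot,k)\|_x$ vanishes at $k=0,M_2$ (it does, since $v$ does) and that its forward difference in $k$ is controlled by $\|\delta_y v\|$ with a mesh-independent constant. This is handled by the identity $\|v(\cdot,k)\|_x^2 - \|v(\cdot,k-1)\|_x^2 = h_1\sum_j (v_{j,k}+v_{j,k-1})(v_{j,k}-v_{j,k-1})$, followed by Cauchy--Schwarz and summation over $k$; the same device works for the slice norms of $\delta_x v$. A cleaner but less elementary alternative is to expand $v$ in the discrete sine basis that diagonalizes $-\Delta_h$ on the rectangle, reducing both ingredients to weighted Cauchy--Schwarz estimates on the Fourier coefficients; I would keep this as a fallback if the slice bookkeeping becomes awkward.
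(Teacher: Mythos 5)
Your proposal is correct. Note that the paper itself offers no proof of this lemma: it is quoted verbatim as Lemma 3.2 of the cited reference, so there is no in-paper argument to compare against. Your route is the standard one for the discrete 2D Agmon inequality on a rectangle with homogeneous Dirichlet data: the 1D bound $\max_j|w_j|^2\le 2\|w\|\,\|\delta w\|$ applied slice-wise in both directions to get $\|v\|_\infty^4\lesssim \|v\|\,\|\delta_x v\|\,\|\delta_y v\|\,\|\delta_x\delta_y v\|$, followed by the summation-by-parts identities $\|\delta_x v\|^2+\|\delta_y v\|^2=-(v,\Delta_h v)$ and $(\delta_x^2 v,\delta_y^2 v)=\|\delta_x\delta_y v\|^2$, which in fact yield the slightly stronger bound $\|v\|_\infty\lesssim\|v\|^{1/2}\|\Delta_h v\|^{1/2}=\|v\|^{1/2}|v|_2^{1/2}$, from which the stated inequality follows since $|v|_2\le\|v\|+|v|_2$. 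The only points requiring care are the ones you already flag: the slice norms $k\mapsto\|v(\cdot,k)\|_x$ and $k\mapsto\|\delta_x v(\cdot,k)\|_x$ vanish at $k=0,M_2$ because $v$ vanishes on entire boundary rows (including corners, which is also what makes the two summations by parts in $(\delta_x^2 v,\delta_y^2 v)$ boundary-term free), and all constants are mesh-independent. I see no gap.
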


\begin{lemma}(\cite{MR4533497},Lemma 3.3)
	\label{lemma**}
	For arbitrary grid function $v\in \Pi_h^0$, one has
	\begin{align*}		
		\|v\|_{\infty} \leq  \sqrt{h_1^{-1}h_2^{-1}}\|v\| .
	\end{align*}
\end{lemma}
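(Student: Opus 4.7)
The plan is to obtain this bound directly from the definitions of the two norms, without invoking any additional machinery. By the definition of the discrete inner product given earlier, one has
\begin{align*}
\|v\|^2 = (v,v) = h_1 h_2 \sum_{j=1}^{M_1-1} \sum_{k=1}^{M_2-1} v_{j,k}^2.
\end{align*}
If $(j_0,k_0)$ is an interior index at which $|v_{j,k}|$ attains its maximum, so that $\|v\|_\infty = |v_{j_0,k_0}|$, then dropping every other (non-negative) term from the sum yields
\begin{align*}
\|v\|^2 \;\ge\; h_1 h_2 \, v_{j_0,k_0}^2 \;=\; h_1 h_2 \, \|v\|_\infty^2,
\end{align*}
and taking square roots produces the stated inequality $\|v\|_\infty \le \sqrt{h_1^{-1}h_2^{-1}}\,\|v\|$.

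I would carry the argument out in just two short steps: first unpack both norms by definition, then bound the sum of squares from below by its largest summand. Observe that the zero boundary condition built into $\Pi_h^0$ plays no real role here, since the estimate is a pointwise statement about a single index; the restriction $v\in\Pi_h^0$ is inherited from the companion Lemma \ref{lemma*}, which does rely on a discrete Sobolev embedding requiring zero boundary values, and is included only to keep the two lemmas within a common functional framework.

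I do not foresee a real technical obstacle, since this is an elementary equivalence-of-norms statement on a finite-dimensional space. The only point that warrants care is normalization: one has to track the factor $h_1 h_2$ appearing in the discrete inner product and confirm that it matches the factor $\sqrt{h_1^{-1}h_2^{-1}}$ on the right-hand side, so that the inequality comes out in the correct (and sharp) direction. Once the weights are accounted for, the proof is complete in essentially one line.
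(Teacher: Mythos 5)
Your proof is correct: bounding the weighted sum of squares from below by its largest term immediately gives $\|v\|^2 \ge h_1 h_2 \|v\|_\infty^2$, which is exactly the stated inequality, and your remark that the zero boundary condition is not actually needed here is accurate. The paper itself offers no proof of this lemma (it is quoted from \cite{MR4533497}), and your elementary argument is the standard one for this norm comparison, so there is nothing to add.
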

\begin{lemma}
	\label{lemma***}
	For arbitrary grid function $\{v^j\}_{j=0}^{N}$, if property P1 is possessed, $\rho\le 7/4$ and $\sigma=1-\frac{\alpha}{2}$, it holds that
	\begin{align*}
		\|\delta_{t}^{\alpha,*}v^n\|		
		\le{\frac{48}{11\tau_n^{\alpha}\Gamma(2-\alpha)}} \max_{0\le j\le n}\|v^j\|,\ for\ n=1,2,\dots,N.
	\end{align*}
\end{lemma}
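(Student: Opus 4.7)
The plan is to express $\delta_t^{\alpha,*}v^n$ as a linear combination $\sum_{j=0}^n p_{n,j}v^j$ with sign-controlled coefficients, so that a triangle-inequality estimate reduces the lemma to a single upper bound on the diagonal coefficient $g_{n-1,n-1}$. By Lemma~\ref{lemma2.1}, $\delta_t^{\alpha,*}v^n=\sum_{j=1}^n g_{n-1,j-1}(v^j-v^{j-1})$, and after Abel summation this becomes $\delta_t^{\alpha,*}v^n=g_{n-1,n-1}v^n+\sum_{j=1}^{n-1}(g_{n-1,j-1}-g_{n-1,j})v^j-g_{n-1,0}v^0$. Property P1 (monotone increasing positive $g_{n-1,j}$ in $j$) makes the coefficient on $v^n$ positive and every other coefficient strictly negative. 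Since the operator annihilates constant grid functions, $\sum_{j=0}^n p_{n,j}=0$, so $\sum_{j=0}^n|p_{n,j}|=2g_{n-1,n-1}$ and the triangle inequality gives $\|\delta_t^{\alpha,*}v^n\|\le 2g_{n-1,n-1}\max_{0\le j\le n}\|v^j\|$. The lemma thus reduces to showing $g_{n-1,n-1}\le 24/(11\tau_n^{\alpha}\Gamma(2-\alpha))$.

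For $n=1$ this is immediate from $g_{0,0}=\sigma^{1-\alpha}\tau_1^{-\alpha}/\Gamma(2-\alpha)$ together with $\sigma\in(0,1)$. For $n\ge 2$, I would start from the decomposition $\tau_n g_{n-1,n-1}=a_{n-1,n-1}+b_{n-1,n-2}$ in Lemma~\ref{lemma2.1}, keeping $a_{n-1,n-1}=\sigma^{1-\alpha}\tau_n^{1-\alpha}/\Gamma(2-\alpha)$ exactly, and bounding $b_{n-1,n-2}$ by replacing the factor $|\eta-t_{n-3/2}|$ inside its defining integral by its maximum $\tau_{n-1}/2$, then evaluating $\int_{t_{n-2}}^{t_{n-1}}(t_n^*-\eta)^{-\alpha}d\eta=[(\sigma\tau_n+\tau_{n-1})^{1-\alpha}-(\sigma\tau_n)^{1-\alpha}]/(1-\alpha)$ in closed form. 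Applying the concavity inequality $(\sigma\tau_n+\tau_{n-1})^{1-\alpha}-(\sigma\tau_n)^{1-\alpha}\le(1-\alpha)\tau_{n-1}(\sigma\tau_n)^{-\alpha}$ and writing $r:=\tau_{n-1}/\tau_n$ produces
\begin{equation*}
\tau_n g_{n-1,n-1}\le \frac{\tau_n^{1-\alpha}\sigma^{-\alpha}}{\Gamma(2-\alpha)}\left[\sigma+\frac{(1-\alpha)r^2}{1+r}\right].
\end{equation*}

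The main obstacle is the final one-variable calculus check tying the constants together. Under $\sigma=1-\alpha/2$ and $\rho\le 7/4$ we have $r\le 7/4$, and since $r\mapsto r^2/(1+r)$ is increasing the worst case is $r=7/4$ with $r^2/(1+r)=49/44$. The target inequality then becomes $\sigma^{1-\alpha}+\tfrac{49}{44}(1-\alpha)\sigma^{-\alpha}\le\tfrac{96}{44}=\tfrac{24}{11}$ for $\alpha\in(0,1)$. A direct derivative computation (using $\sigma=1-\alpha/2$ so that $\sigma'=-1/2$) shows the left-hand side has derivative $-71/44$ at $\alpha=0$ and is monotonically decreasing on $[0,1]$, attaining its supremum $1+49/44=93/44$ as $\alpha\to 0^+$, which is strictly below $96/44$. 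This yields $g_{n-1,n-1}\le 24/(11\tau_n^{\alpha}\Gamma(2-\alpha))$ and completes the proof. The very tight numerical margin $3/44$ here is precisely what forces the mesh hypothesis $\rho\le 7/4$, and indicates that the constant $48/11$ in the statement is essentially sharp for this strategy.
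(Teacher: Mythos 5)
Your argument is correct, and its first half (Abel summation of $\sum_j g_{n-1,j-1}(v^j-v^{j-1})$, sign control via P1, the telescoping identity $\sum_j|p_{n,j}|=2g_{n-1,n-1}$, and the triangle inequality) is exactly what the paper does. Where you diverge is the diagonal bound $g_{n-1,n-1}\le \frac{24}{11}\tau_n^{-\alpha}/\Gamma(2-\alpha)$: the paper simply imports this from Theorem 2.2 of the cited reference on the Alikhanov error convolution structure (which is where the hypotheses $\rho\le 7/4$ and $\pi_A=11/4$ originate), whereas you rederive it from the explicit formulas $\tau_n g_{n-1,n-1}=a_{n-1,n-1}+b_{n-1,n-2}$ in Lemma~\ref{lemma2.1}, via $|\eta-t_{n-3/2}|\le\tau_{n-1}/2$, the closed-form integral, and the concavity bound $(\sigma\tau_n+\tau_{n-1})^{1-\alpha}-(\sigma\tau_n)^{1-\alpha}\le(1-\alpha)\tau_{n-1}(\sigma\tau_n)^{-\alpha}$. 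Your computation checks out, and the self-contained route has the merit of exposing exactly why $\rho\le 7/4$ enters and how tight the constant $48/11$ is; the paper's route is shorter and leans on an established result. One small point: your final calculus step does not actually require monotonicity of $\alpha\mapsto\sigma^{1-\alpha}+\frac{49}{44}(1-\alpha)\sigma^{-\alpha}$, which you assert but do not prove; it suffices to note that $\sigma^{1-\alpha}\le 1$ and $(1-\alpha)\sigma^{-\alpha}\le 1$ (the latter since $\ln(1-\alpha)-\alpha\ln(1-\alpha/2)\le-\alpha+\alpha^2/(2-\alpha)<0$), giving the left-hand side $\le 1+\frac{49}{44}=\frac{93}{44}<\frac{96}{44}$ directly, which would make your argument airtight without the unproved monotonicity claim.
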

\begin{proof}
	By Lemma \ref{lemma2.1} and property P1, and invoking the Minkowski inequality, one has
		\begin{align*}
		\|\delta_{t}^{\alpha,*}v^n\|		
		&= \|\sum_{j=1}^{n}{g_{n-1,j-1}(v^j-v^{j-1})}\|\\
		&=\|g_{n-1,n-1}v^n-\sum_{j=2}^{n}{(g_{n-1,j-1}-g_{n-1,j-2})v^{j-1}}-g_{n-1,0}v^0\|\\&\le g_{n-1,n-1}\|v^n\|+\sum_{j=2}^{n}{(g_{n-1,j-1}-g_{n-1,j-2})\|v^{j-1}\|}+g_{n-1,0}\|v^0\|\\
		&\le \left[g_{n-1,n-1}+\sum_{j=2}^{n}{(g_{n-1,j-1}-g_{n-1,j-2})}+g_{n-1,0}\right]\max_{0\le j\le n}\|v^j\|\\
		&= 2g_{n-1,n-1}\max_{0\le j\le n}\|v^j\|.
	\end{align*}
Next, we estimate the upper bound of $g_{n-1,n-1}$. In Theorem 2.2 of \cite{MR4270344}, for $n\ge 1$ we obtain
\begin{align*}
	g_{n-1,n-1}&\le \frac{24}{11\tau_n\Gamma(1-\alpha)}\int_{t_{n-1}}^{t_n}{(t_n-s)^{-\alpha}}ds\\
	&= \frac{24}{11\tau_n^{\alpha}\Gamma(2-\alpha)}.
\end{align*}
This proof is completed.
\end{proof}
	For $e^n=u^n-U^n$, where $u^n$ is the solution of \eqref{eq2.1}-\eqref{eq2.3}, and $U^n$ satisfies \eqref{eq2.4}-\eqref{eq2.6}, it holds that
	\begin{align}
		\label{eq4.4}
		&\delta_{t}^{\alpha,*}e_{i,j}^n+L_he_{i,j}^{n,*}=(R_f)_{i,j}^n+R_{i,j}^n,	(x_i,y_j)\in \Omega_h, t_n\in(0,T],\\
		&e_{i,j}^0=0, (x_i,y_j)\in \Omega_h,\\
		&e_{i,j}^n=0, (x_i,y_j)\in \partial\Omega_h,  t_n\in[0,T],
	\end{align}
where $(R_f)_{i,j}^n=f(u_{i,j}^{n-1})+\sigma f'(u_{i,j}^{n-1})(u_{i,j}^n-u_{i,j}^{n-1})-f(U_{i,j}^{n-1})-\sigma f'(U_{i,j}^{n-1})(U_{i,j}^n-U_{i,j}^{n-1})$	 and $R_{i,j}^n=(r_1)_{i,j}^n+(r_2)_{i,j}^n+(r_3)_{i,j}^n$.

According to Lemmas \ref{lemma2.3}, \ref{lemma2.4}, and \ref{lemma2.5}, for $1\le n\le N$, we obtain
\begin{align*}
	\|R^n\|=\|r_1^n+r_2^n+r_3^n\|&\lesssim (\tau_1/t_n)^{\hat\gamma+1}+\tau_n^2t_n^{\alpha-2}+h_1^2+h_2^2 \\
&\simeq (\tau_1/t_n)^{\hat\gamma+1}+\tau_1^{2/r}t_n^{\alpha-2/r}+h_1^2+h_2^2,
\end{align*}
and
\begin{align}
	\max\limits_{1\le n\le N}\|R^n\|\le C(1+L_1^2+L_2^2+T^\alpha):=C_R,
\end{align}
where $1+\hat\gamma=min\{\alpha+1,(3-\alpha)/r\}$. To achieve sharper results, we proceed as follows\cite{MR4085134}:
\begin{align}
	\tau_1^{2/r}t_n^{\alpha-2/r}=
	\begin{cases}
		\tau_1^{\alpha}(\tau_1/t_n)^{2/r-\alpha},\quad &2/r-\alpha<1,\\
		t_n^\alpha(\tau_1/t_n)^{2/r}, \quad & 2/r-\alpha\ge 1.
	\end{cases}
\end{align}
\begin{theorem}
	\label{theorem 4.2}
Let $\sigma=1-\frac{\alpha}{2}$, $\rho\le 7/4$, and properties P1-P2 hold. If $\tau$, $\tau_1$ and $h_1=h_2$ are sufficiently small, set $1\le r < 4/\alpha$. For $\{e^n\}_{n=1}^{N}$, with a small positive constant $\epsilon$, one has
\begin{align}
	\|e^n\|\lesssim\begin{cases}
		\tau_1t_n^{\alpha-1}+(h_1^2+h_2^2)t_n^\alpha,\quad & 1\le r\le 2/(\alpha+1),\\
		\tau_1t_n^{\alpha-1}+\tau_1^{2/r}t_n^{2\alpha-2/r}+(h_1^2+h_2^2)t_n^\alpha, \quad   &2/(\alpha+1)< r <3-\alpha,\\
		\tau_1^{1-\epsilon}t_n^{\epsilon+\alpha-1}+\tau_1^{2/r}t_n^{2\alpha-2/r}+(h_1^2+h_2^2)t_n^\alpha, \quad   &r =3-\alpha,\\
		\tau_1^{\frac{3-\alpha}{r}}t_n^{\alpha-\frac{3-\alpha}{r}}+\tau_1^{2/r}t_n^{2\alpha-2/r}+(h_1^2+h_2^2)t_n^\alpha, \quad &r>3-\alpha,
	\end{cases}
\end{align}
and
\begin{align}
	\label{eq4.6}
		\|U^{n}\|_\infty\le \max_{0\le j\le N}\|u^{j}\|_\infty+1,\ for\ 0\le n\le N.
\end{align}
\end{theorem}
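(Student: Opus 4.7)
The plan is to prove both conclusions simultaneously by strong induction on $n$, carrying along the auxiliary $L^{\infty}$-hypothesis $\|U^{j}\|_{\infty}\le M:=\max_{0\le k\le N}\|u^{k}\|_{\infty}+1$ for every $j\le n-1$. This hypothesis confines $f^{(\ell)}(u^{j})$ and $f^{(\ell)}(U^{j})$ ($\ell=0,1,2$) to uniform bounds on a compact interval, which is precisely what makes the Newton linearization remainder $(R_{f})^{n}$ tractable in the induction step.

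First I would reduce the vector error equation \eqref{eq4.4} to a scalar inequality for $\|e^{n}\|$: take the $L^{2}$ inner product with $e^{n,*}$, apply Lemma~\ref{lemma4.5} on the left, drop the elliptic term via $(L_{h}e^{n,*},e^{n,*})\ge 0$ (summation by parts with zero boundary data), bound the right by Cauchy--Schwarz, and cancel the common factor $\sigma\|e^{n}\|+(1-\sigma)\|e^{n-1}\|$ to obtain $\delta_{t}^{\alpha,*}\|e^{n}\|\le\|(R_{f})^{n}\|+\|R^{n}\|$. Then decompose
\begin{align*}
(R_{f})^{n}&=[f(u^{n-1})-f(U^{n-1})]+\sigma f'(u^{n-1})(e^{n}-e^{n-1})\\
&\quad+\sigma[f'(u^{n-1})-f'(U^{n-1})](U^{n}-U^{n-1}).
\end{align*}
Using $|f'(u^{n-1})-f'(U^{n-1})|\le C|e^{n-1}|$ (mean value theorem with $|f''|\le C$) and $|U^{n}-U^{n-1}|\le 2M$, both inherited from the inductive hypothesis, one arrives at $\|(R_{f})^{n}\|\le C(\|e^{n}\|+\|e^{n-1}\|)$ and therefore
\begin{align*}
(\delta_{t}^{\alpha,*}-\lambda_{1})\|e^{n}\|-\lambda_{2}\|e^{n-1}\|\le\|R^{n}\|,\qquad \lambda_{1},\lambda_{2}\ge 0,
\end{align*}
which matches the hypothesis form of the stability result \eqref{eq3.2}.

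Next, using Lemmas~\ref{lemma2.3}--\ref{lemma2.5}, I would write $\|R^{n}\|\lesssim(\tau_{1}/t_{n})^{\hat\gamma+1}+\tau_{1}^{\alpha}(\tau_{1}/t_{n})^{2/r-\alpha}+(h_{1}^{2}+h_{2}^{2})$, which is a sum of three scaled terms $M_{k}(\tau_{1}/t_{n})^{\gamma_{k}+1}$ with $\gamma_{1}=\hat\gamma$, $\gamma_{2}=2/r-\alpha-1$, $\gamma_{3}=-1$, and apply Theorem~\ref{lemma3.3} to each --- part (i) when $r\le(3-\alpha)/\alpha$ and $\gamma_{k}\neq 0$, and part (ii) when $\gamma_{k}\le\alpha-1$ (which automatically covers $\gamma_{3}=-1$ and handles the large-$r$ range where (i) is unavailable). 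Superposition together with $V(n,\gamma)=\tau_{1}t_{n}^{\alpha-1}(\tau_{1}/t_{n})^{\min\{0,\gamma\}}$ reproduces the four cases in the statement by bookkeeping: the boundary $r=3-\alpha$, where $\hat\gamma=0$ makes part (i) inapplicable, is handled by perturbing $\gamma_{1}$ to $-\epsilon$, which produces the $\tau_{1}^{1-\epsilon}$ loss; and for $r\le 2/(\alpha+1)$ the second summand's contribution $\tau_{1}^{1+\alpha}t_{n}^{\alpha-1}$ is absorbed into $\tau_{1}t_{n}^{\alpha-1}$ for small $\tau_{1}$.

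Finally I would close the induction by deriving \eqref{eq4.6}. With $h_{1}=h_{2}=h$, Lemma~\ref{lemma**} gives $\|e^{n}\|_{\infty}\le h^{-1}\|e^{n}\|$; the ceiling $r<4/\alpha$ forces each $\tau_{1}$-exponent in the $L^{2}$ bound to be at least $\alpha/2$, so for $\tau_{1}$ and $h$ sufficiently small one gets $\|e^{n}\|_{\infty}\le 1$ and hence $\|U^{n}\|_{\infty}\le\|u^{n}\|_{\infty}+1\le M$, restoring the hypothesis at level $n$; the base case is immediate from $e^{0}=0$. The main obstacle is precisely this closing step: without the dual induction the cross term $[f'(u^{n-1})-f'(U^{n-1})](U^{n}-U^{n-1})$ is merely quadratic in the error, so one must recover an $L^{\infty}$ bound on $e^{n}$ below $1$ from its $L^{2}$ estimate, which is exactly what forces both the smallness conditions on $(\tau,\tau_{1},h)$ and the ceiling $r<4/\alpha$.
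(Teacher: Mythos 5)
Your overall architecture --- the dual induction carrying the $L^\infty$ bound on $U^j$, the reduction via Lemma~\ref{lemma4.5} and $(L_he^{k,*},e^{k,*})\ge 0$ to the scalar inequality $(\delta_t^{\alpha,*}-\lambda_1)\|e^k\|-\lambda_2\|e^{k-1}\|\le\|R^k\|$, the three-term splitting of $\|R^k\|$ with exponents $\hat\gamma$, $2/r-\alpha-1$, $-1$, the application of Theorem~\ref{lemma3.3} to each piece with the $\epsilon$-perturbation at $r=3-\alpha$, and the use of part (ii) for $r>(3-\alpha)/\alpha$ --- matches the paper's proof. The gap is in how you close the induction. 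You propose to get $\|e^n\|_\infty\le 1$ from the inverse inequality $\|e^n\|_\infty\le h^{-1}\|e^n\|$ alone, arguing that $r<4/\alpha$ makes the $\tau_1$-exponent in the $L^2$ bound at least $\alpha/2$. But the $L^2$ bound is $\max_k\|e^k\|\le K_2(\tau_1^{\min\{\alpha,2/r\}}+h^2)$, so this route yields $\|e^n\|_\infty\lesssim h^{-1}\tau_1^{\min\{\alpha,2/r\}}+h$, and the first term is not small: $h$ and $\tau_1$ are independent parameters, each merely ``sufficiently small,'' with no coupling such as $\tau_1^{\min\{\alpha,2/r\}}\lesssim h$ assumed in the theorem. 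If $h\to 0$ much faster than $\tau_1$, your bound diverges, and the condition $r<4/\alpha$ never actually enters your argument in a way that rescues it.

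The paper closes the induction differently, and this is where both the hypothesis $r<4/\alpha$ and Lemmas~\ref{lemma*}, \ref{lemma**}, \ref{lemma***} are genuinely needed. It first bounds the seminorm $|e^m|_2=\nu^{-1}\|L_he^m\|$: from the error equation, $\|L_he^{k,*}\|\le\|\delta_t^{\alpha,*}e^k\|+\|(R_f)^k\|+\|R^k\|$, where $\|\delta_t^{\alpha,*}e^k\|$ is controlled by Lemma~\ref{lemma***} as $\lesssim\tau_1^{-\alpha}\max_j\|e^j\|$; a telescoping geometric-series argument in $k$ (exploiting $\sigma/(1-\sigma)>1$, i.e.\ $\sigma>1/2$) then converts the bound on $\|L_he^{k,*}\|=\nu\|\sigma\,|e^k|_2$-type combinations into $|e^m|_2\le(Q_2\tau_1^{-\alpha}+Q_3)\max_j\|e^j\|+Q_4$. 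Feeding this into the discrete Agmon-type inequality of Lemma~\ref{lemma*}, $\|e^m\|_\infty\le C\|e^m\|^{1/2}(\|e^m\|+|e^m|_2)^{1/2}$, gives in the regime $h_1^2+h_2^2\le\tau_1^{\min\{\alpha,2/r\}}$ a bound of order $\tau_1^{\min\{\alpha,\,4/r-\alpha,\,2/r\}/2}$, which tends to zero precisely because $4/r-\alpha>0$, i.e.\ $r<4/\alpha$. Only in the complementary regime $h_1^2+h_2^2>\tau_1^{\min\{\alpha,2/r\}}$ does the paper use your inverse inequality, where it is harmless since then $h^{-1}\|e^m\|\lesssim h^{-1}\cdot h^2=h$. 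Without this two-case split and the intermediate $|e^m|_2$ estimate, the induction cannot be closed under the stated hypotheses.
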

\begin{proof}
	For $n=0$, \eqref{eq4.6} holds obviously. Assuming that \eqref{eq4.6} holds for $n$ = 0, 1, 2, $\cdots, m-1$, then for $1\le k\le m$,  applying Lagrange mean value theorem, the estimate of $R_f$ is given by:
\begin{align*}
	\|(R_f)^k\|&=\|f(u^{k-1})+\sigma f'(u^{k-1})(u^k-u^{k-1})-f(U^{k-1})-\sigma f'(U^{k-1})(U^k-U^{k-1})\|\\
	&\le \|f(u^{k-1})-f(U^{k-1})\|+\sigma\|(f'(u^{k-1})-f'(U^{k-1}))u^k\|+\sigma\|f'(U^{k-1})(u^k-U^{k})\|\\
	&+\sigma\|(f'(u^{k-1})-f'(U^{k-1}))u^{k-1}\|+\sigma\|f'(U^{k-1})(u^{k-1}-U^{k-1})\|\\&\le Q_1(\|e^{k}\|+\|e^{k-1}\|),
\end{align*}
where the positive constant $Q_1$ depends on $\sigma$, $f$ and $\max_{0\le j\le N}\|u^{j}\|_\infty$.
Let $\delta_{t}^{\alpha,*}e^k$ take the inner product with $e^{k,*}$ from \eqref{eq4.4}. Considering $(L_he^{k,*},e^{k,*})\ge 0$, it holds that
\begin{align*}
	(\delta_{t}^{\alpha,*}e^k,e^{k,*})&\le ((R_f)^k+R^k,e^{k,*})\\
	&\le \left[Q_1(\|e^{k}\|+\|e^{k-1}\|)+\|R^k\|\right]\|e^{k,*}\|\\
	&\le  \left[Q_1(\|e^{k}\|+\|e^{k-1}\|)+\|R^k\|\right]\left[\sigma\|e^k\|+(1-\sigma)\|e^{k-1}\|\right].
\end{align*}
Applying Lemma \ref{lemma4.5}, one has
\begin{align}
	\delta_t^{\alpha,*}\|e^k\|&\le Q_1(\|e^{k}\|+\|e^{k-1}\|)+\|R^k\|.
\end{align}
Due to
\begin{align}
	\|R^k\|\lesssim (\tau_1/t_k)^{\hat\gamma+1}+h_1^2+h_2^2+
	\begin{cases}
		\tau_1^{\alpha}(\tau_1/t_k)^{2/r-\alpha},\quad &2/r-\alpha<1,\\
		t_k^\alpha(\tau_1/t_k)^{2/r}, \quad & 2/r-\alpha\ge 1,
	\end{cases}
\end{align}
we set $W_1^0=W_2^0=W_3^0=0$, $\delta_t^{\alpha,*}W_1^k- Q_1(W_1^{k}+W_1^{k-1})=(\tau_1/t_k)^{\hat\gamma+1}$, $\delta_t^{\alpha,*}W_2^k-Q_1(W_2^{k}+W_2^{k-1})=h_1^2+h_2^2$, and
\begin{align*}
	\delta_t^{\alpha,*}W_3^k- Q_1(W_3^{k}+W_3^{k-1})=\begin{cases}
		\tau_1^{\alpha}(\tau_1/t_k)^{2/r-\alpha},\quad &2/r-\alpha<1,\\
		t_k^\alpha(\tau_1/t_k)^{2/r}, \quad & 2/r-\alpha\ge 1.
	\end{cases}
\end{align*}
For any $1\le r\le(3-\alpha)/\alpha$, employing Theorem \ref{lemma3.3}, we can obtain $W_2^k\lesssim (h_1^2+h_2^2)V(k,-1)$,
\begin{equation}
	\label{result2}
	\begin{split}
		W_1^k\lesssim \begin{cases}
			V(k,\hat\gamma-\epsilon),\quad &r= 3-\alpha,\\
			V(k,\hat\gamma),\quad &r\neq 3-\alpha,\\
		\end{cases} \  W_3^k\lesssim\begin{cases}
		\tau_1^\alpha V(k,2/r-\alpha-1),\quad &2/r<\alpha+1,\\
		V(k,2/r-1),\quad &2/r\ge\alpha+1.
		\end{cases}
	\end{split}
\end{equation}
It is worth noting that when $r=3-\alpha$, $\hat\gamma=0$. However, our stability results do not apply to this case, and thus it requires special treatment: $(\tau_1/t_k)^{\hat\gamma+1}\le (\tau_1/t_k)^{\hat\gamma+1-\epsilon}$ with a small $0<\epsilon<1-\alpha$. Since, for any $r>(3-\alpha)/\alpha$, we have both $\hat\gamma<\alpha-1$ and $2/r-\alpha-1<\alpha-1$, the inequalities \eqref{result2} hold for all $1\le r<4/\alpha$. Therefore, the error estimate for $1\le k\le m$ is given by:
\begin{equation*}
	\begin{split}
		\|e^k\|\le K_1\begin{cases}
			V(k,\hat\gamma-\epsilon)+\tau_1^\alpha V(k,2/r-\alpha-1)+(h_1^2+h_2^2)V(k,-1),\quad &r= 3-\alpha,\\
			V(k,\hat\gamma)+\tau_1^\alpha V(k,2/r-\alpha-1)+(h_1^2+h_2^2)V(k,-1),\quad &2/(3-\alpha)\neq2/r<\alpha+1,\\
			V(k,\hat\gamma)+V(k,2/r-1)+(h_1^2+h_2^2)V(k,-1),\quad &2/r\ge\alpha+1.
		\end{cases}
	\end{split}
\end{equation*}
  Furthermore,
\begin{align}
	\|e^k\|\le K_1\begin{cases}
		2\tau_1t_k^{\alpha-1}+(h_1^2+h_2^2)t_k^\alpha,\quad & 1\le r\le 2/(\alpha+1),\\
		\tau_1t_k^{\alpha-1}+\tau_1^{2/r}t_k^{2\alpha-2/r}+(h_1^2+h_2^2)t_k^\alpha, \quad   &2/(\alpha+1)< r <3-\alpha,\\
		\tau_1^{1-\epsilon}t_k^{\epsilon+\alpha-1}+\tau_1^{2/r}t_k^{2\alpha-2/r}+(h_1^2+h_2^2)t_k^\alpha, \quad   &r =3-\alpha,\\
		\tau_1^{\frac{3-\alpha}{r}}t_k^{\alpha-\frac{3-\alpha}{r}}+\tau_1^{2/r}t_k^{2\alpha-2/r}+(h_1^2+h_2^2)t_k^\alpha, \quad &r>3-\alpha.
	\end{cases}
\end{align}
and the maximum value of the error in $L^2$-norm is obtained as:
 \begin{align}
 	\max_{1\le k\le m}\|e^k\|\le K_2( \tau_1^{min\{\alpha,2/r\}}+h_1^2+h_2^2),
 \end{align} where $K_2=K_1\max\{2,T^\alpha+1,T^{2\alpha-2/r}+1\}$.

From \eqref{eq4.4}, we obtain
\begin{align}
	-L_he_{i,j}^{k,*}=\delta_{t}^{\alpha,*}e_{i,j}^k-(R_f)_{i,j}^k-R_{i,j}^k,\ for\ 1\le k\le m.
\end{align}
Taking the discrete $L^2$-norm, we have
\begin{align}
\|L_he^{k,*}\|\le \|\delta_{t}^{\alpha,*}e^k\|+\|(R_f)^k\|+\|R^k\|.
\end{align}
Using $\tau_n\simeq\tau_1^{1/r}t_n^{1-1/r}$, there exists a positive constant $\bar Q$ such that $\tau_n\ge \bar{Q}\tau_1^{1/r}t_n^{1-1/r}\ge \bar{Q}\tau_1$. Then by utilizing the triangle inequality of the $L^2$-norm and Lemma \ref{lemma***}, we have
\begin{align*}
	\nu\left[\sigma|e^{k}|_2-(1-\sigma)|e^{k-1}|_2\right]&\le \|\delta_{t}^{\alpha,*}e^k\|+Q_1(\|e^{k}\|+\|e^{k-1}\|)+\|R^k\|\\
	&\le{\frac{48}{11\tau_n^{\alpha}\Gamma(2-\alpha)}} \max_{1\le j\le m}\|e^j\|+2Q_1\max_{1\le j\le m}\|e^j\|+C_R
	\\
		&\le {\frac{48}{11\Gamma(2-\alpha)}}\bar{Q}^{-\alpha}\tau_1^{-\alpha}\max_{1\le j\le m}\|e^j\|+2Q_1\max_{1\le j\le m}\|e^j\|+C_R
		\\&:=E.
\end{align*}
Since the above relationship holds for $k=1,2,...,m$, that is
\begin{align*}
	\nu\left[\sigma|e^{m}|_2-(1-\sigma)|e^{m-1}|_2\right]&\le E,\\
	\nu\left[\sigma|e^{m-1}|_2-(1-\sigma)|e^{m-2}|_2\right]&\le E,\\
	\dots\\
	\nu\left[\sigma|e^{1}|_2-(1-\sigma)|e^{0}|_2\right]&\le E,
\end{align*}
by eliminating the intermediate terms, we obtain
\begin{align}
	(\frac{\sigma}{1-\sigma})^{m-1}\nu\sigma|e^m|_2\le \left[1+\frac{\sigma}{1-\sigma}+\dots+(\frac{\sigma}{1-\sigma})^{m-1}\right]E.
\end{align}
Due to $\frac{1-\sigma}{\sigma}<1$, it is obvious that
\begin{align}
	\left[1+\frac{\sigma}{1-\sigma}+\dots+(\frac{\sigma}{1-\sigma})^{m-1}\right](\frac{1-\sigma}{\sigma})^{m-1}=\frac{(\frac{1-\sigma}{\sigma})^m-1}{\frac{1-\sigma}{\sigma}-1}\le \frac{1}{1-\frac{1-\sigma}{\sigma}}=\frac{\sigma}{2\sigma-1}.
\end{align}
Therefore, the estimate for the semi-norm of $e_{i,j}^m$ is given by:
\begin{align*}
	|e^{m}|_2&\le \frac{1}{\nu(2\sigma-1)}\left({\frac{48}{11\Gamma(2-\alpha)}}\bar{Q}^{-\alpha}\tau_1^{-\alpha}\max_{1\le j\le m}\|e^j\|+2Q_1\max_{1\le j\le m}\|e^j\|+C_R\right)\\
	&\le (Q_2\tau_1^{-\alpha}+Q_3)\max_{1\le j\le m}\|e^j\|+Q_4,
\end{align*}
where  $Q_2={\frac{1}{\nu(2\sigma-1)}\frac{48}{11\Gamma(2-\alpha)}}\bar{Q}^{-\alpha}$, $Q_3=\frac{2Q_1}{\nu(2\sigma-1)}$, and $Q_4=\frac{C_R}{\nu(2\sigma-1)}$.
$Q_2$, $Q_3$ and $Q_4$ are independent of mesh sizes.

Using Lemma \ref{lemma*}, one has
\begin{align*}
	\|e^{m}\|_\infty&\le C\|e^m\|^{\frac{1}{2}}\left(\|e^m\|+|e^m|_2\right)^{\frac{1}{2}}\\
	&\le C\max_{1\le j\le m}\|e^j\|^{\frac{1}{2}}\left[\max_{1\le j\le m}\|e^j\|+(Q_2\tau_1^{-\alpha}+Q_3)\max_{1\le j\le m}\|e^j\|+Q_4\right]^{\frac{1}{2}}\\
	&\le C(K_2)^{\frac{1}{2}}(\tau_1^{\min\{\alpha,2/r\}}+h_1^2+h_2^2)^{\frac{1}{2}}\left[K_2(1+Q_2\tau_1^{-\alpha}+Q_3)(\tau_1^{\min\{\alpha,2/r\}}+h_1^2+h_2^2)+Q_4\right]^{\frac{1}{2}}\\
	&\le CK_3(\tau_1^{\min\{\alpha,2/r\}}+h_1^2+h_2^2)^{\frac{1}{2}}\left[\tau_1^{-\alpha}(\tau_1^{\min\{\alpha,2/r\}}+h_1^2+h_2^2)+1\right]^{\frac{1}{2}}
\end{align*}
where $K_3=\max\{K_2(T^\alpha+Q_2+Q_3T^\alpha)^{\frac{1}{2}},(K_2Q_4)^{\frac{1}{2}}\}$.

Case A: $h_1^2+h_2^2\le \tau_1^{\min\{\alpha,2/r\}}$.
\begin{align*}
		\|e^{m}\|_\infty&\le \sqrt{2}CK_3(\tau_1^{\min\{\alpha,2/r\}})^{\frac{1}{2}}\left(2\tau_1^{-\alpha}\tau_1^{\min\{\alpha,2/r\}}+1\right)^{\frac{1}{2}}\\
		&\le \sqrt{2}CK_3\left(2\tau_1^{-\alpha}\tau_1^{\min\{2\alpha,4/r\}}+\tau_1^{\min\{\alpha,2/r\}}\right)^{\frac{1}{2}}\\
		&=\sqrt{2}CK_3\left(2\tau_1^{\min\{\alpha,4/r-\alpha\}}+\tau_1^{\min\{\alpha,2/r\}}\right)^{\frac{1}{2}}\\
		&\le \sqrt{6}CK_3\left(\tau_1^{\min\{\alpha,4/r-\alpha,2/r\}}\right)^{\frac{1}{2}}.
\end{align*}
Under the condition $r<4/\alpha$, if $\tau_1$ is sufficiently small, then $\|e^{m}\|_\infty\le 1$.

Case B: $h_1^2+h_2^2>\tau_1^{\min\{\alpha,2/r\}}$. Using Lemma \ref{lemma**}, we have
\begin{align*}
	\|e^{m}\|_\infty&\le \sqrt{h_1^{-1}h_2^{-1}}\|e^m\| \\
	&\le \sqrt{h_1^{-1}h_2^{-1}}\max_{1\le j\le m}\|e^j\|\\
	&\le 2K_2\sqrt{h_1^{-1}h_2^{-1}}(h_1^2+h_2^2).
\end{align*}
If $h_1=h_2$ and $h_1$ is sufficiently small, then $\|e^{m}\|_\infty\le 1$.
Consequently, regardless of the relative magnitudes of $h_1^2+h_2^2$ and $\tau_1^{\min\{\alpha,2/r\}}$, the inequality $\|e^{m}\|_\infty\le 1$ holds if $\tau_1$, $h_1$, and $h_2$ are sufficiently small, which further implies:
\begin{align*}
	\|U^{m}\|_\infty&\le \|u^{m}\|_\infty+\|e^{m}\|_\infty\\
	&\le \|u^{m}\|_\infty+1\\
	&\le \max_{0\le j\le N}\|u^{j}\|_\infty+1.
\end{align*}
	In summary, the inequality \eqref{eq4.6} holds for $n=m$, which finishes the mathematical induction.
\end{proof}

\begin{remark}
	\label{remark 4.2}
	From Theorem \ref{theorem 4.2}, we can further obtain the global convergence: \begin{align*}
		\max_{1\le n\le N}\|e^n\|\lesssim\begin{cases}
			\tau_1^\alpha+h_1^2+h_2^2,\quad&if\ 1\le r< 1/\alpha,\\
			\tau_1^{\min\{\alpha,2/r\}}+h_1^2+h_2^2,\quad&if\ 1/\alpha\le r<(3-\alpha)/\alpha,\\
			\tau_1^{2/r}+h_1^2+h_2^2,\quad&if\ r\ge (3-\alpha)/\alpha.
		\end{cases}
	\end{align*}
	This implies that $\max_{1\le n\le N}\|e^n\|\lesssim \tau_1^{\min\{\alpha,2/r\}}+h_1^2+h_2^2\simeq N^{-\min\{\alpha r,2\}}+h_1^2+h_2^2$.
	
	When $t_n=T$, $\|e^N\|\lesssim
	\begin{cases}
	\tau_1+h_1^2+h_2^2,\quad & if\ 1\le r\le 2/(\alpha+1),\\
\tau_1+\tau_1^{2/r}+h_1^2+h_2^2, \quad  &if\ 2/(\alpha+1)< r <3-\alpha,\\
\tau_1^{1-\epsilon}+\tau_1^{2/r}+h_1^2+h_2^2, \quad  &if\ r =3-\alpha,\\
\tau_1^{2/r}+h_1^2+h_2^2, &if\ r>3-\alpha,
	\end{cases}$\\
	which implies that $\|e^N\|\lesssim \tau_1^{\min\{1,2/r\}}+h_1^2+h_2^2\simeq N^{-\min\{r,2\}}+h_1^2+h_2^2$ if $r\neq 3-\alpha$, and $\|e^N\|\lesssim \tau_1^{\min\{1-\epsilon,2/r\}}+h_1^2+h_2^2\simeq N^{-\min\{(1-\epsilon)r,2\}}+h_1^2+h_2^2$ if $r= 3-\alpha$.\\
	$\mathbf{Notation.}$
	In Theorem \ref{theorem 4.2}, it is required that \(1\le r < 4/\alpha \).
\end{remark}

\begin{lemma}(\cite{MR3904430}, Theorem 3.1)
	\label{lemma4.1}
	Let the properties $P1$, $P3$ and $P4$ hold and $0\le (1-\sigma) <1$. If both $(g^n)_{n=1}^{N}$ and $(\lambda_l)_{l=0}^{N-1}$ are non-negative sequences, and the temporal mesh satisfies that $\tau\le (\sqrt[\alpha]{2\pi_A\Gamma(2-\alpha)\Lambda})^{-1}$, where $\Lambda$ is a constant such that $\Lambda\ge \sum_{l=0}^{N-1}{\lambda_l}$. For any non-negative sequence $(v^k)_{k=0}^{N}$ satisfying $\delta_t^{\alpha,*}(v^n)^2\le \sum_{k=1}^{n}{\lambda_{n-k}(v^{k,*})^2}+v^{k,*}g^n$ for all $1\le n\le N$, one has
	\begin{align*}
		v^n\le 2E_\alpha(2\max\{1,\rho\}\pi_A\Lambda t_n^\alpha)(v^0+\pi_A\Gamma(1-\alpha)\max_{1\le k\le n}\{t_k^\alpha g^k\}),\ for\ 1\le n\le N.
	\end{align*} $E_\alpha$ denote the Mittag Leffler function, i.e., $E_\alpha(\eta)=\sum_{k=0}^{\infty}{\frac{\eta^k}{\Gamma(1+k\alpha)}}$.
\end{lemma}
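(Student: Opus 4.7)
The plan is to reduce the quadratic-in-$v^n$ hypothesis to a linear discrete fractional Gronwall-type inequality, and then to bound its solutions by a Mittag--Leffler growth factor via induction, following the strategy of the cited reference.

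First, I would invoke the scalar analogue of Lemma~\ref{lemma4.5} (which follows from the same quadratic-form argument, specialised to a single positive component): for any non-negative scalar sequence $(v^k)$,
\[
\delta_t^{\alpha,*}(v^n)^2 \;\ge\; 2\, v^{n,*}\, \delta_t^{\alpha,*} v^n.
\]
Combined with the hypothesis, this yields
\[
2\, v^{n,*}\, \delta_t^{\alpha,*} v^n \;\le\; \sum_{k=1}^n \lambda_{n-k} (v^{k,*})^2 + v^{n,*} g^n.
\]
Second, a maximum-selection argument linearises the inequality: for each $n$ at which one seeks a bound, pick $n_*\in\{1,\dots,n\}$ with $v^{n_*,*}=\max_{1\le k\le n}v^{k,*}$, use $(v^{k,*})^2\le v^{k,*}\, v^{n_*,*}$ in the inequality evaluated at $k=n_*$, and divide by $v^{n_*,*}$ (the zero case being trivial) to obtain
\[
2\, \delta_t^{\alpha,*} v^{n_*} \;\le\; \sum_{k=1}^{n_*} \lambda_{n_*-k}\, v^{k,*} + g^{n_*}.
\]

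Third, I would apply property P3 to bound the Alikhanov weights from below by the continuous Riemann--Liouville kernel, and exploit the smallness condition $\tau \le (\sqrt[\alpha]{2\pi_A \Gamma(2-\alpha)\Lambda})^{-1}$ to absorb the diagonal ($k=n_*$) term of the $\lambda$-sum into the left-hand side. Inverting the resulting discrete fractional difference, this produces an integral inequality schematically of the form
\[
v^{n_*} \;\le\; v^0 + \pi_A \Gamma(1-\alpha)\, t_{n_*}^{\alpha}\, g^{n_*} + \pi_A\Gamma(1-\alpha) \sum_{k=1}^{n_*-1} \omega_{n_*,k}\, v^{k,*},
\]
where $\omega_{n_*,k}$ are nonnegative convolution weights obtained from P3. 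Iterating this inequality, each layer contributes a term of the order $(\pi_A\Lambda t_{n_*}^\alpha)^k / \Gamma(1+k\alpha)$, and summing over $k\ge 0$ reproduces exactly the Mittag--Leffler factor $E_\alpha(2\max\{1,\rho\}\pi_A\Lambda t_n^\alpha)$. The constant $\max\{1,\rho\}$ is injected by P4, which bounds the mesh ratios $\tau_{k-1}/\tau_k$ arising when re-indexing the discrete convolution; the overall prefactor $2$ comes from the reverse estimate $v^n\le v^{n,*}/\sigma\le 2v^{n,*}$, valid since $1\ge\sigma\ge 1/2$.

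The main obstacle will be the sharp tracking of the constants in the Mittag--Leffler exponent, in particular producing $\pi_A$ and $\max\{1,\rho\}$ without accumulating any mesh-dependent loss in the iterative unrolling; this is precisely what P3 (for the kernel lower bound) and P4 (for the ratio control) are tailored to provide. A secondary subtlety is the conversion between the $*$-averaged maximum $v^{n_*,*}$ and the pointwise value $v^n$: the induction must propagate the bound on $v^{n_*,*}$, and only at the final step one uses non-negativity of $v$ and $v^{n,*}\ge \sigma v^n$ to transfer the estimate to $v^n$, which is the source of the factor $2$ on the right-hand side of the conclusion.
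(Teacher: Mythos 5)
The paper offers no proof of this lemma: it is imported verbatim from Theorem 3.1 of \cite{MR3904430}, so there is no in-paper argument to compare against. Judged on its own terms, your sketch has the right global architecture (complementary discrete kernels from P3, a maximum-selection linearisation, a Mittag--Leffler summation), but its very first step rests on a false inequality. You claim $\delta_t^{\alpha,*}(v^n)^2\ge 2v^{n,*}\,\delta_t^{\alpha,*}v^n$; the standard Alikhanov quadratic-form inequality --- stated as Lemma~\ref{lemma4.2} in this paper --- goes the other way, $v^{n,*}\,\delta_t^{\alpha,*}v^n\ge\tfrac12\delta_t^{\alpha,*}(v^n)^2$, and the reversed version fails in general: the difference $\sum_{j}g_{n-1,j-1}(v^j-v^{j-1})(v^j+v^{j-1}-2v^{n,*})$ has no favourable sign without the monotonicity properties, and with them it has the \emph{opposite} sign to the one you need. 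Since the hypothesis bounds $\delta_t^{\alpha,*}(v^n)^2$ from above, the correct inequality yields no upper bound on $v^{n,*}\delta_t^{\alpha,*}v^n$, so the chain $2v^{n,*}\delta_t^{\alpha,*}v^n\le\delta_t^{\alpha,*}(v^n)^2\le\cdots$ collapses and the subsequent division by $v^{n_*,*}$ is unjustified. A second structural problem is that your linearised inequality $2\,\delta_t^{\alpha,*}v^{n_*}\le\cdots$ would hold at the single index $n_*$ only, which is not enough to ``invert the discrete fractional difference'': inversion by the complementary kernels requires the inequality at every level up to $n_*$.

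The repair, and what the cited proof actually does, is to linearise \emph{after} inverting the discrete operator rather than before: use P3 to construct complementary kernels $P^{(n)}_{n-j}\ge 0$ with $\sum_{j=1}^{n}P^{(n)}_{n-j}\le\pi_A\,t_n^{\alpha}/\Gamma(1+\alpha)$ (this is where $\pi_A$ enters, and the re-indexing of $\lambda_{n-k}$ against the $*$-offset is where P4 injects $\max\{1,\rho\}$), convolve the hypothesis with them to obtain
\begin{align*}
(v^n)^2\le (v^0)^2+\sum_{j=1}^{n}P^{(n)}_{n-j}\Bigl(\sum_{k=1}^{j}\lambda_{j-k}(v^{k,*})^2+v^{j,*}g^j\Bigr),
\end{align*}
and only then choose the maximising index and divide by the maximum, which legitimately produces a linear inequality amenable to the iterative Mittag--Leffler bound. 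Finally, the step-size restriction $\tau\le(\sqrt[\alpha]{2\pi_A\Gamma(2-\alpha)\Lambda})^{-1}$ is what allows the diagonal term $\lambda_0(v^{n,*})^2$ to be absorbed into the left-hand side at the cost of a factor at most $2$; this absorption, rather than the conversion $v^n\le 2v^{n,*}$, is the standard source of the prefactor $2$ in the conclusion.
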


\begin{lemma}(\cite{MR3904430}, Lemma A.1)
	\label{lemma4.2}
	Suppose that $1\ge\sigma\ge\frac{1}{2}$, and the properties P1 and P2 in Lemma \ref{lemma2.2} hold.  For any grid function $\{v^j\}_{j=0}^{N}\in \Pi_h^0$, it follows that
	\begin{align*}
		(v^{n,*},\delta_{t}^{\alpha,*}v^n)\ge \frac{1}{2} \delta_{t}^{\alpha,*}\|{v^n} \|
		^2\ for\ n\ge 1.
	\end{align*}
\end{lemma}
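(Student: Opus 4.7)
\noindent\textbf{Plan for the proof of Lemma \ref{lemma4.2}.}
The claim is equivalent to showing
\[
D := 2\bigl(v^{n,*},\delta_t^{\alpha,*}v^n\bigr) - \delta_t^{\alpha,*}\|v^n\|^2 \;\ge\; 0,
\]
so I would work with $D$ directly. Writing $w^j = v^j - v^{j-1}$ and $G^j := g_{n-1,j-1}$, Lemma \ref{lemma2.1} gives $\delta_t^{\alpha,*}v^n = \sum_{j=1}^n G^j w^j$. Applying the same representation to $\|v^j\|^2$ and using the elementary identity $\|v^j\|^2-\|v^{j-1}\|^2 = (v^j+v^{j-1},w^j)$ yields
\[
D \;=\; \sum_{j=1}^{n} G^j\bigl(2v^{n,*} - v^j - v^{j-1},\,w^j\bigr).
\]
The right-hand side is a real quadratic form in $(w^1,\dots,w^n)$, and the whole game is to expose its nonnegativity using only properties P1 and P2.

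The key reformulation is to introduce the partial sums
\[
u^j \;:=\; v^{n,*}-v^{j-1} \;=\; \sigma w^n + \sum_{k=j}^{n-1} w^k,\qquad 1\le j\le n-1,\qquad u^n := \sigma w^n.
\]
For $j<n$, a short calculation gives $2v^{n,*}-v^j-v^{j-1} = 2u^{j+1}+w^j = u^j+u^{j+1}$, whence
\[
\bigl(2v^{n,*}-v^j-v^{j-1},\,w^j\bigr) \;=\; (u^j+u^{j+1},\,u^j-u^{j+1}) \;=\; \|u^j\|^2-\|u^{j+1}\|^2,
\]
while for $j=n$ one has $2v^{n,*}-v^n-v^{n-1} = (2\sigma-1)w^n$. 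Substituting and performing Abel summation on the telescoping part,
\[
D \;=\; G^1\|u^1\|^2 + \sum_{j=2}^{n-1}(G^j-G^{j-1})\|u^j\|^2 + \bigl[(2\sigma-1)G^n - \sigma^2 G^{n-1}\bigr]\|w^n\|^2,
\]
where the last coefficient comes from combining $(2\sigma-1)G^n\|w^n\|^2$ with $-G^{n-1}\|u^n\|^2 = -\sigma^2 G^{n-1}\|w^n\|^2$.

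With this normal form, positivity is immediate from the two properties at hand. Property P1 gives $G^1>0$ and $G^j-G^{j-1}>0$ for $2\le j\le n-1$, so every squared norm carries a nonnegative weight. For the $\|w^n\|^2$ term, since $\sigma\in[\tfrac12,1]$ we have $\sigma^2\le\sigma$, and P2 yields $(2\sigma-1)G^n > \sigma G^{n-1}\ge \sigma^2 G^{n-1}$, so its coefficient is strictly positive. (The boundary case $n=1$ is trivial: only the last term survives and equals $(2\sigma-1)G^1\|w^1\|^2\ge 0$.) The main obstacle — and the only nonroutine step — is identifying the telescoping structure through the auxiliary vectors $u^j$ so that the Abel-summation rearrangement leaves precisely the matched coefficients $(2\sigma-1)G^n$ and $\sigma^2 G^{n-1}$ on $\|w^n\|^2$; once that is in place, P1 and P2 slot in cleanly and finish the argument.
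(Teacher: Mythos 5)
Your argument is correct and complete. Note that the paper does not prove this lemma at all: it is imported verbatim as Lemma A.1 of \cite{MR3904430}, so there is no in-paper proof to compare against. What you have written is essentially the standard argument from that reference, reconstructed correctly: the reduction to $D=\sum_{j=1}^{n}G^j\bigl(2v^{n,*}-v^j-v^{j-1},\,w^j\bigr)$ is right, the identification $2v^{n,*}-v^j-v^{j-1}=u^j+u^{j+1}$ with $w^j=u^j-u^{j+1}$ (using $v^{n,*}=v^{n-1}+\sigma w^n$) correctly turns the $j<n$ terms into the telescoping differences $\|u^j\|^2-\|u^{j+1}\|^2$, and the Abel summation leaves exactly the coefficients $G^1$, $G^j-G^{j-1}$ and $(2\sigma-1)G^n-\sigma^2G^{n-1}$, whose positivity follows from P1 and from P2 combined with $\sigma^2\le\sigma$ for $\sigma\in[\tfrac12,1]$. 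The $n=1$ and $n=2$ boundary cases check out as you indicate (P2 is only invoked for $n\ge2$, which is consistent with its stated range $2\le m\le N$). The only cosmetic remark is that your definition of $u^n$ as a separate case is unnecessary, since $u^j=v^{n,*}-v^{j-1}$ already gives $u^n=\sigma w^n$; this does not affect correctness.
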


\begin{theorem}
	Let $U_1$ be the solution of \eqref{eq2.4}-\eqref{eq2.6}, and $U_2$ satisfy the following equation:
	\begin{align}
		\label{eq4.11}
		&\delta_{t}^{\alpha,*}U_{i,j}^n+L_hU_{i,j}^{n,*}=f(U_{i,j}^{n-1})+\sigma f'(U_{i,j}^{n-1})(U_{i,j}^n-U_{i,j}^{n-1}),	(x_i,y_j)\in \Omega_h, t_n\in(0,T]\\
		&U_{i,j}^0=\hat{u}_0(x_i,y_j), (x_i,y_j)\in \Omega_h\\
		&U_{i,j}^n=0, (x_i,y_j)\in \partial\Omega_h,  t_n\in[0,T].
		\label{eq4.13}
	\end{align}
Set $\sigma=1-\frac{\alpha}{2}$. Assuming that properties P1-P4 hold, $\tau\le \left(\sqrt[\alpha]{\frac{8}{\alpha}Q_5\pi_A\Gamma(2-\alpha)}\right)^{-1}$, and the mesh satisfies the conditions of Theorem \ref{theorem 4.2}, then it holds that
	$\|U_1^n-U_2^n\|\le C\|u_0-\hat{u}_0\|$ for $0\le n\le N$. $Q_5$ is mentioned in \eqref{eq4.14}.
\end{theorem}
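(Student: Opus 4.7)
The plan is to form a difference equation for $z^n := U_1^n - U_2^n$ and run a discrete energy argument, using the fractional Gr\"onwall framework already developed in this paper. Subtracting \eqref{eq2.4}--\eqref{eq2.6} from \eqref{eq4.11}--\eqref{eq4.13}, the grid function $z^n$ satisfies $\delta_t^{\alpha,*}z^n + L_h z^{n,*} = G^n$ on $\Omega_h$, with $z^0 = u_0 - \hat u_0$ and $z^n = 0$ on $\partial\Omega_h$, where
\[
G^n = \bigl[f(U_1^{n-1}) - f(U_2^{n-1})\bigr] + \sigma\bigl[f'(U_1^{n-1})(U_1^n - U_1^{n-1}) - f'(U_2^{n-1})(U_2^n - U_2^{n-1})\bigr].
\]

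First I would derive a mesh-independent pointwise bound on $G^n$. By Theorem \ref{theorem 4.2} applied to both $U_1$ and $U_2$, these iterates are uniformly bounded in $L^\infty$, so $f$ and $f'$ may be treated as globally Lipschitz on the relevant range. Using the identity
\[
f'(U_1^{n-1})(U_1^n - U_1^{n-1}) - f'(U_2^{n-1})(U_2^n - U_2^{n-1}) = \bigl(f'(U_1^{n-1}) - f'(U_2^{n-1})\bigr)(U_1^n - U_1^{n-1}) + f'(U_2^{n-1})(z^n - z^{n-1}),
\]
the mean-value theorem, and the discrete-increment estimate \eqref{eq4.14} to control the factor $U_1^n - U_1^{n-1}$ in the first summand, I obtain $|G_{i,j}^n| \le Q_5(|z_{i,j}^n| + |z_{i,j}^{n-1}|)$, hence $\|G^n\| \le Q_5(\|z^n\| + \|z^{n-1}\|)$, with $Q_5$ depending only on $f$, $f'$, the $L^\infty$-bound in \eqref{eq4.6}, and the constant from \eqref{eq4.14}.

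Next, I would take the discrete $L^2$-inner product of the difference equation with $z^{n,*}$. Positivity of $L_h$ gives $(L_h z^{n,*}, z^{n,*}) \ge 0$, and Lemma \ref{lemma4.2} gives $(\delta_t^{\alpha,*} z^n, z^{n,*}) \ge \tfrac{1}{2}\delta_t^{\alpha,*}\|z^n\|^2$. Writing $w^n := \|z^n\|$ and $w^{n,*} := \sigma w^n + (1-\sigma)w^{n-1}$, the bound $\|z^{n,*}\| \le w^{n,*}$ together with Cauchy--Schwarz yields $(G^n, z^{n,*}) \le Q_5(w^n + w^{n-1})w^{n,*}$. Since $\sigma = 1 - \alpha/2 \ge 1/2$, the elementary inequality $w^n + w^{n-1} \le (1-\sigma)^{-1}w^{n,*} = (2/\alpha)w^{n,*}$ gives
\[
\delta_t^{\alpha,*}(w^n)^2 \le \frac{4Q_5}{\alpha}(w^{n,*})^2.
\]
This fits Lemma \ref{lemma4.1} with $\lambda_0 = 4Q_5/\alpha$, $\lambda_l = 0$ for $l \ge 1$, $g^n \equiv 0$, and $\Lambda = 4Q_5/\alpha$. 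The prescribed step-size restriction $\tau \le (\sqrt[\alpha]{(8/\alpha)Q_5\pi_A\Gamma(2-\alpha)})^{-1}$ is exactly $\tau \le (\sqrt[\alpha]{2\pi_A\Gamma(2-\alpha)\Lambda})^{-1}$ for this $\Lambda$, so Lemma \ref{lemma4.1} delivers $w^n \le 2E_\alpha\bigl(8\max\{1,\rho\}\pi_A Q_5 T^\alpha/\alpha\bigr)\,w^0$, i.e.\ $\|U_1^n - U_2^n\| \le C\|u_0 - \hat u_0\|$ with $C$ independent of $\tau_j$, $h_1$, $h_2$.

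The main obstacle is the cross term $(f'(U_1^{n-1}) - f'(U_2^{n-1}))(U_1^n - U_1^{n-1})$: Lipschitz continuity of $f'$ alone controls it by $|z^{n-1}|\cdot|U_1^n - U_1^{n-1}|$, and the discrete increment on the right cannot be dominated in terms of $z$. A mesh-independent a priori bound on $U_1^n - U_1^{n-1}$ is therefore essential, and this is precisely the role played by \eqref{eq4.14}, whose constant gets absorbed into $Q_5$. A secondary but non-trivial check is that the coefficient $\lambda_0 = 4Q_5/\alpha$ produced by the $\sigma \ge 1/2$ step exactly matches the mesh restriction displayed in the theorem through the requirement $\tau \le (\sqrt[\alpha]{2\pi_A\Gamma(2-\alpha)\Lambda})^{-1}$ of Lemma \ref{lemma4.1}, which is why the constants in the theorem statement take the form shown.
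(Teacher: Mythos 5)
Your proof is correct and follows essentially the same route as the paper: subtract the two schemes, bound the nonlinear difference via the mean value theorem and the uniform $L^\infty$ bounds supplied by Theorem \ref{theorem 4.2}, apply Lemma \ref{lemma4.2} together with the $\sigma\ge 1/2$ averaging inequality to reach $\delta_t^{\alpha,*}(v^n)^2\le \frac{4}{\alpha}Q_5 (v^{n,*})^2$, and close with the fractional Gr\"onwall inequality of Lemma \ref{lemma4.1}, whose step-size restriction matches the one in the statement. One small referencing slip: \eqref{eq4.14} is not a separate ``discrete-increment estimate'' for $U_1^n-U_1^{n-1}$ but simply the label of the resulting bound $\|E_f^n\|\le Q_5(\|\hat{e}^n\|+\|\hat{e}^{n-1}\|)$; the mesh-independent control of the cross term actually comes from the $L^\infty$ bound \eqref{eq4.6}, which you do invoke, so the argument stands.
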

\begin{proof}
	Subtracting \eqref{eq4.11} from \eqref{eq2.4}, and
	taking the inner product of both sides with $\hat{e}^{n,*}$, one has
	\begin{align}
		(\delta_t^{\alpha,*}\hat{e}^n,\hat{e}^{n,*})+(L_h\hat{e}^{n,*},\hat{e}^{n,*})=(E_f^n,\hat{e}^{n,*}),\ for\ n\ge 1,
	\end{align}
	where $\hat{e}^n=U_1^n-U_2^n$ and $\|E_f^n\|=\|f(U_1^{n-1})+\sigma f'(U_1^{n-1})(U_1^n-U_1^{n-1})-f(U_2^{n-1})-\sigma f'(U_2^{n-1})(U_2^n-U_2^{n-1})\|$. In Theorem \ref{theorem 4.2}, we have proved that $U_1$, and $U_2$ are bounded, i.e., $\|U_1^n\|_\infty\le C$, and $\|U_2^n\|_\infty\le C$ for $0\le n\le N$. Applying the Lagrange mean value theorem again, for $1\le n\le N$ we obtain
	\begin{equation}
			\begin{split}
			\|E_f^n\|&\le \|f(U_1^{n-1})-f(U_2^{n-1})\|+\sigma\|(f'(U_1^{n-1})-f'(U_2^{n-1}))U_1^n\|+\sigma\|f'(U_2^{n-1})(U_1^n-U_2^{n})\|\\
			\label{eq4.14}
			&+\sigma\|(f'(U_1^{n-1})-f'(U_2^{n-1}))U_1^{n-1}\|+\sigma\|f'(U_2^{n-1})(U_1^{n-1}-U_2^{n-1})\|\\
			&\le Q_5(\|\hat{e}^n\|+\|\hat{e}^{n-1}\|).
		\end{split}
	\end{equation}
		 Based on Lemma \ref{lemma4.2}, and the fact that $(L_h\hat{e}^{n,*},\hat{e}^{n,*})\ge 0$, we have
	 \begin{align*}
	 	\frac{1}{2}\delta_t^{\alpha,*}\|\hat{e}^n\|^2&\le Q_5(\|\hat{e}^n\|+\|\hat{e}^{n-1}\|)\|\hat{e}^{n,*}\|\\
	 	&\le Q_5\frac{1}{1-\sigma}\left[\sigma\|\hat{e}^n\|+(1-\sigma)\|\hat{e}^{n-1}\|\right]\|\hat{e}^{n,*}\|\\
	 	&\le \frac{2}{\alpha}Q_5\left[\sigma\|\hat{e}^{n}\|+(1-\sigma)\|\hat{e}^{n-1}\|\right]^2\ for\ n\ge 1.
	 \end{align*}
	 By setting $v^k=\|\hat{e}^k\|$, we can obtain an inequality as follows:
	\begin{align}
		\delta_t^{\alpha,*}(v^n)^2\le \frac{4}{\alpha}Q_5 (v^{n,*})^2\ for\ n\ge 1.
	\end{align}
	By Lemma \ref{lemma4.1}, it holds that $\|\hat{e}^n\|=v^n\le 2E_\alpha(\frac{8}{\alpha}Q_5\max\{1,\rho\}\pi_A t_n^\alpha)\|\hat{e}^0\|$, for $1\le n\le N$.
\end{proof}

\section{Numerical examples}
In this section, we compute the convergence orders for two numerical examples with $\sigma=1-\frac{\alpha}{2}$ on a standard graded mesh. The standard graded temporal mesh $t_n=T(n/N)^r$ with $r\ge 1$ is a special case of quasi-graded meshes. For a standard graded mesh, the following relationship holds(\cite{MR3936261},Lemma 6): $\rho_{j-1}\ge \rho_j\ge 1$ for $j\ge 2$. Therefore, properties P1-P4 hold. To ensure accuracy, we use the Gauss-Kronrod quadrature to calculate the results for $a_{k,j}$ and $b_{k,j}$ in Lemma \ref{lemma2.1}. Denote $\|e^N\|$, $\max_{1\le n\le N}\|e^n\|$ by $E_L$ and $E_G$, respectively.
  \begin{example}\label{ex1}
  Firstly, consider a two-dimensional nonlinear equation with $\Omega=(0,1)\times(0,1)$, $T=0.5$, $\nu=0.1$. The specific equation is as follows:
 	\begin{align}
 		& D_{t}^{\alpha }u-\nu\Delta u=u-u^3+f(\bm{x},t),\ (\bm{x},t)\in \Omega\times \left(0,T\right],\\
 		&u(\bm{x},0)=u_0(x),\ \bm{x}\in \Omega,\\
 		&u(\bm{x},t)=0,\ (\bm{x},t)\in \partial\Omega\times \left[0,T\right].
 	\end{align}
 	 We let the exact solution be $u=t^\alpha xy(1-x)(1-y)$, and the force term $f(\bm{x},t)$ can be calculated.
 \end{example}		
Tables \ref{Tab:ex1}--\ref{Tab:ex13} present the local and global errors and convergence rates for Example \ref{ex1} with different choices of $r$, which show that our analysis is sharp. From these results, one can see the local error gets the optimal convergence order $2$ under milder grading parameter $r=2$, while for global error, under the same choice $r=2$, its convergence order is only $2\alpha$.
 	\begin{table}[H]
 	\centering
 	\caption{Local and global errors and convergence rates with $r=1$ for Example \ref{ex1}}
 	\begin{tabular}{cccccccc}
 		\toprule
 		$M_1=M_2=25$	& $N$ & 64 & 128 & 256 & 512 & 1024&expected order \\
 		\midrule
 		\multirow{6}{*}{$E_L$}
 		& $\alpha = 0.3$ & 1.1289e-05 & 5.5701e-06 & 2.7549e-06 &1.3652e-06 & 6.7759e-07 \\
 		&  &  & 1.0192 & 1.0157 & 1.0129 & 1.0106 &1\\
 		& $\alpha = 0.5$ & 1.2444e-05 & 6.2554e-06 & 3.1398e-06 & 1.5742e-06 & 7.8864e-07 \\
 		&  &  & 0.9923 & 0.9945 & 0.9960 & 0.9972 &1\\
 		& $\alpha = 0.7$  &1.0889e-05&5.4981e-06&2.7663e-06&1.3887e-06&6.9612e-07\\
 		&&&0.9859&0.9910&0.9942&0.9963&1\\
 		\midrule
 		\multirow{6}{*}{$E_G$}
 		& $\alpha = 0.3$ & 4.0699e-04 & 3.6508e-04 & 3.2066e-04 & 2.7717e-04 & 2.3661e-04 \\
 		&  &  & 0.1568 & 0.1872 & 0.2103 & 0.2282 &0.3\\
 		& $\alpha = 0.5$ & 2.2884e-04 & 1.7075e-04 & 1.2536e-04 & 9.1012e-05 & 6.5564e-05 \\
 		&  &  & 0.4225 & 0.4458 & 0.4619 & 0.4731 &0.5\\
 		& $\alpha = 0.7$ & 7.5376e-05&4.7568e-05&2.9733e-05&1.8475e-05&1.1439e-05 \\
 		&  &  &0.6641&0.6780&0.6864&0.6917&0.7 \\
 		\bottomrule
 	\end{tabular}\label{Tab:ex1}
 \end{table}

		\begin{table}[ht!]
			\centering
			\caption{Local and global errors and convergence rates with $r=2$ for Example \ref{ex1}}
			\begin{tabular}{cccccccc}
				\toprule
	$M_1=M_2=25$	& $N$ & 64 & 128 & 256 & 512 & 1024&expected order \\
				\midrule
				\multirow{6}{*}{$E_L$}
				& $\alpha = 0.3$ & 9.2114e-08 & 2.0588e-08 & 4.7006e-09 & 1.0968e-09 & 2.6082e-10 \\
				&  &  & 2.1616 & 2.1309 & 2.0995 & 2.0723 &2\\
				& $\alpha = 0.5$ & 1.8993e-07 & 4.7388e-08 & 1.1815e-08 & 2.9480e-09 & 7.3626e-10 \\
				&  &  & 2.0029 & 2.0039 & 2.0028 &2.0015 &2\\
				& $\alpha = 0.7$  &3.5427e-07&9.4715e-08&2.4918e-08&6.4808e-09&1.6713e-09\\
				 &&&1.9032&1.9264&1.9429&1.9552&2\\
				\midrule
				\multirow{6}{*}{$E_G$}
				& $\alpha = 0.3$ & 1.6779e-04 & 1.1604e-04 & 7.8963e-05 & 5.3169e-05 & 3.5554e-05 \\
				&  &  & 0.5320 & 0.5554 & 0.5706 & 0.5806 &0.6\\
				& $\alpha = 0.5$ & 3.3526e-05 & 1.6952e-05 & 8.5238e-06 & 4.2739e-06 & 2.1399e-06 \\
				&  &  & 0.9838 &0.9919 &0.9959 & 0.9980 &1.0\\
				& $\alpha = 0.7$ & 4.5169e-06&1.7218e-06&6.5389e-07&2.4799e-07&9.4001e-08 \\
				&  &  & 1.3914&1.3968&1.3988&1.3995&1.4 \\
				\bottomrule
			\end{tabular}\label{Tab:ex12}
		\end{table}

				\begin{table}[ht!]
			\centering
			\caption{Local and global errors and convergence rates with $r=2/\alpha$ for Example \ref{ex1}}
			\begin{tabular}{cccccccc}
				\toprule
				$M_1=M_2=25$	& $N$ & 64 & 128 & 256 & 512 & 1024&expected order \\
				\midrule
				\multirow{6}{*}{$E_L$}
				& $\alpha = 0.3$ & 1.5049e-06 & 4.1242e-07 & 1.0947e-07 & 2.8441e-08 & 7.2869e-09 \\
				&  &  & 1.8675 & 1.9136 & 1.9445 & 1.9646 &2\\
				& $\alpha = 0.5$ & 6.8032e-07 & 1.9328e-07 & 5.2776e-08 & 1.4023e-08 & 3.6571e-09\\
				&  &  & 1.8156 & 1.8727 & 1.9121 & 1.9390 &2\\
				& $\alpha = 0.7$  &8.2232e-08&3.3182e-08&1.1382e-08&3.5367e-09&1.0332e-09\\
				&&&1.3093&1.5437&1.6862&1.7753&2\\
				\midrule
				\multirow{6}{*}{$E_G$}
				& $a = 0.3$ & 1.5611e-06 & 4.1242e-07 & 1.0947e-07 & 2.8441e-08 & 7.2869e-09 \\
				&  &  & 1.9203 & 1.9136 & 1.9445 & 1.9646 &2\\
				& $\alpha = 0.5$ & 1.2189e-06 & 3.0802e-07 & 7.7260e-08 & 1.9331e-08 & 4.8337e-09 \\
				&  &  & 1.9845 & 1.9952 & 1.9988 & 1.9997 &2\\
				& $\alpha = 0.7$ & 8.4567e-07&2.1539e-07&5.4163e-08&1.3562e-08&3.3919e-09 \\
				&  &  & 1.9732&1.9916&1.9977&1.9994&2 \\
				\bottomrule
			\end{tabular}\label{Tab:ex13}
		\end{table}
\begin{example}\label{ex2}
Secondly, we consider the following nonlinear problem without an exact solution in the domain $\Omega=(0,1)\times(0,1)$ and $T=0.5$:
	\begin{align}
		& D_{t}^{\alpha }u-\nu\Delta u=u(1-u)+f(\bm{x},t),\ (\bm{x},t)\in \Omega\times \left(0,T\right],\\
		&u(\bm{x},0)=sin(\pi x)sin(\pi y),\ \bm{x}\in \Omega,\\
		&u(\bm{x},t)=0,\ (\bm{x},t)\in \partial\Omega\times \left[0,T\right],
	\end{align}
	where the diffusion coefficient $\nu$ is 0.3.
\end{example}
As the exact solution is unknown for the numerical example, we employ the two-mesh method\cite{MR1750671} to compute the local error and the convergence order in the temporal direction. The local error is given by:
\begin{align*}
E_L=\|U^N-V^{2N}\|.
\end{align*}
Here, $V^{n}$ denotes the numerical solution obtained on the second temporal grid $t_n=T(\frac{n}{2N})^r$.

Tables \ref{tab:ex2}--\ref{tab:ex23} present the local errors and convergence rates for Example \ref{ex2} with different choices of $r$. From which one can see the convergence order of local error is $O(N^{-\min\{r, 2\}})$,
which further illustrate that our analysis is sharp.
	\begin{table}[h]
	\centering
	\caption{Local errors and convergence rates with $r=1$ for Example \ref{ex2}}
	\begin{tabular}{cccccccc}
		\toprule
		$M_1=M_2=25$	& $N$ & 32 & 64 & 128 & 256 & 512&expected order \\
		\midrule
		\multirow{6}{*}{$E_L$}
		& $\alpha = 0.3$ & 1.0119e-04 & 4.9593e-05 & 2.4344e-05 & 1.1971e-05 & 5.8990e-06 \\
		&  &  & 1.0289 & 1.0266 & 1.0240 & 1.0210 &1\\
		& $\alpha = 0.5$ & 1.0303e-04 & 5.1394e-05 & 2.5677e-05 & 1.2861e-05 & 6.4536e-06 \\
		&  &  & 1.0034 & 1.0011 & 0.9975 & 0.9948 &1\\
		& $\alpha = 0.7$  &7.0050e-05&3.8347e-05&2.0242e-05&1.0467e-05&5.3441e-06\\
		&&&0.8693&0.9217&0.9516&0.9698&1\\
		\bottomrule
	\end{tabular}\label{tab:ex2}
\end{table}

\begin{table}[h]
	\centering
	\caption{Local errors and convergence rates with $r=2$ for Example \ref{ex2}}
	\begin{tabular}{cccccccc}
		\toprule
		$M_1=M_2=25$	& $N$ & 32 & 64 & 128 & 256 & 512&expected order \\
		\midrule
		\multirow{6}{*}{$E_L$}
		& $\alpha= 0.3$ & 1.2807e-06 & 2.9426e-07 & 7.0058e-08 & 1.7236e-08 & 4.3390e-09 \\
		&  &  & 2.1218 & 2.0705 & 2.0231 & 1.9900 &2\\
		& $\alpha = 0.5$ & 5.4414e-06 & 1.4114e-06 & 3.6036e-07 & 9.0765e-08 & 2.2662e-08 \\
		&  &  & 1.9469 & 1.9696 & 1.9892 & 2.0019 &2\\
		& $\alpha = 0.7$  &2.1078e-05&5.4404e-06&1.3928e-06&3.5403e-07&8.9523e-08\\
		&&&1.9539&1.9657&1.9761&1.9835&2\\
		\bottomrule
	\end{tabular}\label{tab:ex22}
\end{table}

\begin{table}[h]
	\centering
	\caption{Local errors and convergence rates with $r=2/\alpha$ for Example \ref{ex2}}
	\begin{tabular}{cccccccc}
		\toprule
		$M_1=M_2=25$	& $N$ & 32 & 64 & 128 & 256 & 512&expected order \\
		\midrule
		\multirow{6}{*}{$E_L$}
		& $\alpha = 0.3$ & 3.7638e-05 & 9.9018e-06 & 2.5602e-06 & 6.5394e-07 & 1.6570e-07 \\
		&  &  & 1.9264 & 1.9514 & 1.9690 & 1.9806 &2\\
		& $\alpha = 0.5$ & 4.2338e-05 & 1.1227e-05 & 2.9306e-06 & 7.5532e-07 & 1.9288e-07 \\
		&  &  & 1.9150 & 1.9377 & 1.9560 & 1.9694 &2\\
		& $\alpha = 0.7$  &4.9098e-05&1.3088e-05&3.4544e-06&9.0249e-07&2.3374e-07\\
		&&&1.9074&1.9218&1.9364&1.9490&2\\
		\bottomrule
	\end{tabular}\label{tab:ex23}
\end{table}

	
	\section*{Acknowledgements}
	The research is supported in part by Natural Science Foundation of Shandong Province under Grant ZR2023MA077, the National Natural Science Foundation of China (Nos.  11801026 and 12171141), and Fundamental Research Funds for the Central Universities (No. 202264006).
	
	\section*{Declaration of competing interest}
	The authors declare no competing interest.
	
	
	
	\bibliographystyle{plain}
	\bibliography{Alikref}
	
	
	%
	%
	%
\end{document}